\newtheorem{thm}{Theorem}
\newtheorem{lem}[thm]{Lemma}
\newtheorem{exmp}[thm]{Example}
\newtheorem{rem}[thm]{Remark}
\newtheorem{defn}[thm]{Definition}
\newtheorem{assum}[thm]{Assumption}
\newtheorem{prob}{Problem}
\newcommand{\scale}{1}
\newcommand{\tbroucke}{\hspace{-.01in}$\sim$\hspace{-.01in}broucke}
\newcommand{\Zero}{{\mathbf{0}}}
\newcommand{\RR}{{\mathbb R}}
\newcommand{\TT}{{\mathbb T}}
\newcommand{\cB}{{\mathcal B}}
\newcommand{\cC}{{\mathcal C}}
\newcommand{\cF}{{\mathcal F}}
\newcommand{\cG}{{\mathcal G}}
\newcommand{\cH}{{\mathcal H}}
\newcommand{\cO}{{\mathcal O}}
\newcommand{\cS}{{\mathcal S}}
\newcommand{\cY}{{\mathcal Y}}
\newcommand{\Bs}{{\mathscr{B}}}
\newcommand{\Ms}{{\mathscr{M}}}
\newcommand{\Zs}{{\mathscr{Z}}}
\newcommand{\mh}{\widehat{m}}
\newcommand{\ol}{\overline}
\renewcommand{\Im}{\textup{Im}}
\newcommand{\conv}{\textup{co}}
\newcommand{\cone}{\textup{cone}}
\newcommand{\spn}{\textup{sp}}
\newcommand{\rnk}{\textup{rank}}
\begin{document}

\title{Reach Control on Simplices by Piecewise Affine Feedback} 
\author{Mireille~E.~Broucke and Marcus~Ganness}
\address{Dept. of Electrical and Computer Engineering \\
University of Toronto, Toronto ON Canada M5S 3G4}
\email{broucke@control.utoronto.ca}
\date{\today}
\maketitle

\begin{abstract}                          
We study the reach control problem for affine systems on simplices, and the focus 
is on cases when it is known that the problem is not solvable by continuous state feedback.
We examine from a geometric viewpoint the structural properties of the system which 
make continuous state feedbacks fail. This structure is encoded by so-called
{\em reach control indices}, which are defined and developed in the paper. Based on
these indices, we propose a subdivision algorithm and associated piecewise affine feedback.
The method is shown to solve the reach control problem in all remaining cases, assuming
it is solvable by open-loop controls. 
\end{abstract}


\section{Introduction}
\label{sec:intro}

This paper studies the {\em reach control problem} (RCP) on simplices. The problem is for trajectories of an
affine system defined on a simplex to reach a prespecified facet of the simplex in finite time. The overall concept
of the problem and its setting were introduced in \cite{HVS01} and further developed in \cite{HVS04, HVS06, RB06, MEB10}. 
The significance of the problem stems from its capturing the essential features of reachability problems 
for control systems: the presence of state constraints and the notion of trajectories reaching a goal 
in a guided and finite-time manner. The problem fits within a larger family of reachability problems; namely,
to reach a target set $\mathcal{X}_f$ with state constraint in a set $\mathcal{X}$, 
denoted as $\mathcal{X} \overset{\mathcal{X}}{\longrightarrow} \mathcal{X}_f$.
In the present context, we assume that the state constraints give rise to a state space that is 
triangulable \cite{LEE}; then the reachability specification is converted 
to a sequence of reachability problems on simplices of the triangulation. 
The reader is referred to \cite{MEB10,HVS01,HVS04,HVS06,RB06,LB11,BELTA} for further motivations, 
including how the studied problem arises in fundamental problems concerning hybrid systems \cite{TEEL}. 

RCP is one among several different research paths for analysis and synthesis of piecewise affine (PWA) feedback 
\cite{EXPLICIT,CAMLIBEL,SCHUTTER}. Recent progress on explicit MPC schemes has 
fueled the interest in PWA feedbacks \cite{EXPLICIT}, such feedbacks play a prominent role in linear switched systems 
\cite{LIBERZON}, and PWA systems have significant applications in engineering and biology \cite{TAHAMI,CASEY,OKTEM,BECK}.
A feature of our approach is that, rather than directly computing a controller numerically, we seek conditions for existence of controllers based on the problem data. 
This follows classical lines of thought which are well established in control theory. 
Another classical underpinning is to exploit system structure to understand 
the limits of a control system, again distinguishing our approach from numerical methods.

\section{Contributions}

In \cite{MEB10} it was shown that, under a suitable triangulation of the state space, 
affine feedback and continuous state feedback are 
equivalent from the point of view of solvability of the reach control problem (RCP). The approach
is based, fundamentally, on fixed point theory. The latter allows to deduce that continuous
state feedbacks always generate closed-loop equilibria inside the simplex when affine feedbacks do. 
The current paper departs from these findings, and using a geometric approach, 
we explore the system structure that gives rise to equilibria. This structure is encoded in
so-called {\em reach control indices}. The first goal of this paper is to elucidate these indices. 
The second goal is to use the indices
to obtain a subdivision of the simplex and an associated piecewise affine feedback to solve RCP 
in those cases when the problem is not solvable by continuous state feedback. It is shown that RCP is 
solvable by piecewise affine feedback if it is solvable by open-loop controls. 
This finding gives strong evidence to the relevance of the class of piecewise affine 
feedbacks in solving reachability problems.

\begin{figure}[!t]
\begin{center}
\psfrag{AA}[][][\scale]{(a)}
\psfrag{BB}[][][\scale]{(b)}
\psfrag{S}[][][\scale]{$\cS$}
\psfrag{S1}[][][\scale]{$\cS^1$}
\psfrag{S2}[][][\scale]{$\cS^2$}
\psfrag{xb}[][][\scale]{$\ol{x}$}
\psfrag{vp}[][][\scale]{$v'$}
\psfrag{v0}[][][\scale]{$v_0$}
\psfrag{v1}[][][\scale]{$v_1$}
\psfrag{v2}[][][\scale]{$v_2$}
\psfrag{y0}[][][\scale]{$y_0$}
\psfrag{b1}[][][\scale]{$b_1$}
\psfrag{b2}[][][\scale]{$b_2$}
\psfrag{F0}[][][\scale]{$\cF_0$}
\psfrag{F1}[][][\scale]{$\cF_1$}
\psfrag{F2}[][][\scale]{$\cF_2$}
\psfrag{O}[][][\scale]{$\cO$}
\psfrag{ConeS}[][][\scale]{$\cone(\cS)$}
\psfrag{ConeS1}[][][\scale]{$\cone(\cS^1)$}
\includegraphics[width= 0.9\linewidth]{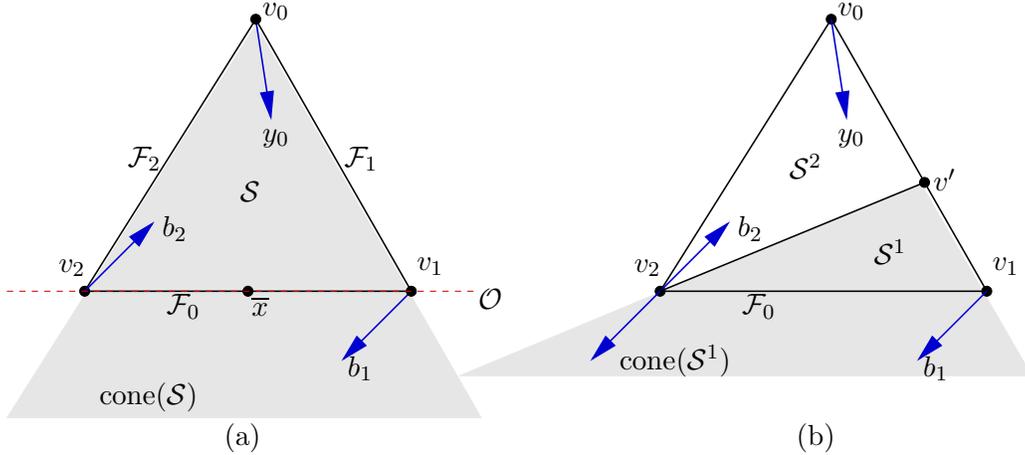}
\caption{Main idea in a 2D example.}
\label{fig0}
\end{center}
\end{figure}

The main ideas of the paper can be understood informally. Consider a 2D simplex $\cS = \conv\{ v_0,v_1,v_2 \}$ 
the convex hull of vertices $v_0$, $v_1$, and $v_2$, with 
1D facets $\cF_0$, $\cF_1$, and $\cF_2$, as in Figure~\ref{fig0}(a). 
Consider a single-input control system $\dot{x} = A x + b u + a$ defined on $\cS$. 
The {\em reach control problem} is to find a state feedback $u = f(x)$ such that all closed-loop trajectories 
initialized in $\cS$ leave $\cS$ in finite time through the {\em exit facet} $\cF_0$. 
The procedure to solve this control problem by continuous state feedback is to select control values $u_i$ 
at the vertices $v_i$ such that the velocity vectors $A v_i + b u_i + a$ point inside $\cone(\cS)$, the
cone with apex at $v_0$ determined by $\cS$; otherwise trajectories may leave $\cS$ 
through $\cF_1$ or $\cF_2$, which is disallowed. 
The controller $u =f(x)$ is formed as a continuous interpolation of the control values at the vertices. 
Label the vertex velocity vectors as $y_0 = A v_0 + b u_0 + a$, and
$b_i = A v_i + bu_i + a$, $i = 1,2$, as in the figure. 
Suppose that $A x + a \in \Im(b)$ along a line $\cO$ 
through $v_1$ and $v_2$. Clearly closed loop equilibria can only appear 
on the set $\cG := \cS \cap \cO$. Now it is obvious that this 
control problem cannot be solved by any continuous state feedback. 
For at $v_1$, $b_1$ has to point down to be inside $\cone(\cS)$, but at $v_2$, $b_2$ has to point up.
If we continuously interpolate along $\cF_0$ from $v_1$ to $v_2$, the continuous vector field, 
always in $\Im(b)$ along $\cF_0$, must pass through zero (by the Intermediate Value Theorem) at 
some $\ol{x}$ along $\cF_0$. The defect is that there are two vertices $v_1$ and $v_2$ that ``share'' 
the only control direction available, $b$. 

Suppose we now allow discontinuous feedback. 
Place a point $v'$ along the edge from $v_0$ to $v_1$ and define a new simplex $\cS^1= \conv \{ v', v_1, v_2 \}$.
See Figure~\ref{fig0}(b). Notice that as we slide $v'$ from $v_0$ to $v_1$ the cone $\cone(\cS^1)$ with apex at $v'$
widens at $v_2$ enough that $-b_2$ points inside $\cone(\cS^1)$ at $v_2$. Notice also that $v_1$ is unaffected by
sliding $v'$. Pick such a $v'$. Then one can construct 
an affine feedback $u = K^1 x + g^1$ on $\cS^1$ that assigns a non-zero velocity vector at every point on $\cF_0$, 
so there is no closed loop equilibrium in $\cS^1$. By \cite{HVS06,RB06}, RCP is solved on $\cS^1$. 
For the remaining simplex $\cS^2$ it is also possible to devise an affine controller so there is no equilibrium in
$\cS^2$. This is because equilibria can only appear in $\cS^2$ at $v_2 \in \cO$. But at $v_2$ we can select 
the velocity vector $b_2 \neq 0$. Again RCP can be solved on $\cS^2$ by affine feedback. Combining the two
affine feedbacks, we get a discontinuous piecewise affine feedback that solves RCP on $\cS$. Note that a discontinuity
is introduced because we use two different control values at $v_2$.

The contribution of the paper is to make mathematically rigorous the informal ideas described above. 
The main technical difficulty arises in dealing with multi-input systems. For this we bring in two tools. 
First we introduce the reach control indices to group together vertices in $\cG$ that share control inputs. These
indices are similar in spirit to the controllability indices to group together states 
that share control inputs \cite{BRUNOV}. As with the controllability indices, the reach control indices
require a special ordering of a set of linearly independent vectors; however, other technical details 
are different. 
The second tool is $\Ms$-matrices which help to concisely represent the constraints on the vector field 
at vertices of $\cG$. The reader is referred to Chapter 6 of \cite{BERMAN} for relevant background. 

The paper is organized as follows.
In Section~\ref{sec:back} we review the reach control problem.
In Section~\ref{sec:necessary} we give necessary conditions for solvability by open-loop controls.
These then shape the assumptions to construct the reach control indices, 
which are developed in Section~\ref{sec:indices}.
In Section~\ref{sec:pwa}, a subdivision method and associated piecewise
affine feedback are proposed to solve RCP when
continuous state feedback does not. The main result is presented in Section~\ref{sec:pwa} showing the relationship 
between solvability via open-loop controls and solvability via piecewise affine feeback. 
Examples are presented in Section~\ref{sec:examples}. 
Preliminary versions of parts of this paper appeared in \cite{NOLCOS10,ACC11}.
Proofs of supporting lemmas are found in the Appendix. 

{\em Notation.} \
For $x \in \RR^n$, the notation $x \succ 0$ ($x \succeq 0$) means $x_i > 0$ ($x_i \ge 0$) for $1 \le i \le n$.
The notation $x \prec 0$ ($x \preceq 0$) means $-x \succ 0$ ($-x \succeq 0$).
Notation $\Zero$ denotes the subset of $\RR^n$ containing only the zero vector.
The notation $\Bs$ denotes the open unit ball, and $\ol{\Bs}$ denotes its closure.
The notation $\conv\{ v_1,v_2,\ldots \}$ denotes the convex hull of a set of points $v_i \in \RR^n$,
and $\spn\{ y_1,y_2,\ldots \}$ denotes the span of vectors $y_i \in \RR^n$.
The notation $(v_i,v_j)$ denotes the open segment in $\RR^n$ between $v_i, v_j \in \RR^n$. 
Finally, $T_{\cS}(x)$ denotes the Bouligand tangent cone to set $\cS$ at a point $x$ \cite{CLARKE}.

\section{Problem Statement}
\label{sec:back}

Consider an $n$-dimensional simplex $\cS$, the convex hull of $n+1$ affinely independent 
points in $\RR^n$. Let its vertex set be $V := \{ v_0, \dots, v_n \}$ and its 
facets $\mathcal{F}_0, \dots, \mathcal{F}_n$. The facet will be indexed by the vertex it 
does not contain. Let $h_j \in \RR^n$, $j=0, \dots, n$ be the unit normal vector to each facet
$\mathcal{F}_j$ pointing outside of the simplex. Facet 
$\mathcal{F}_0$ is called the {\em exit facet} of $\mathcal{S}$.
Define the index set $I := \{ 1, \ldots, n \}$. For $x \in \cS$ defined the closed, convex cone 
\[ 
\cC(x) := \{ y \in \RR^n ~|~ h_j \cdot y \leq 0, ~j \in I ~~s.t.~~ x \in \cF_j \} \,. 
\]
We'll write $\cone(\cS) := \cC(v_0)$ because $\cC(v_0)$ is the tangent cone to $\cS$ at 
$v_0$. We consider the affine control system on $\cS$:
\begin{equation}
\label{eq:thesystem}
\dot x =A x + Bu + a \,, 
\end{equation}
where $A \in \mathbb{R}^{n \times n}$, $a \in \mathbb{R}^n$, $B \in \mathbb{R}^{n \times m}$, 
and $\rnk (B)=m$. Let $\mathcal{B} = \Im (B)$, the image of $B$. 
Define $\cO := \{ ~ x \in \RR^n ~|~ A x + a \in \cB \}$ and $\cG := \cS \cap \cO$. 
Note that closed-loop equilibria of \eqref{eq:thesystem} can only appear in $\cO$.
Let $\phi_u(t,x_0)$ denote the trajectory of \eqref{eq:thesystem} starting at $x_0$ under
input $u$. 

\begin{exmp}
\label{example1}
Consider Figure~\ref{fig1} where we illustrate the notation in a 2D example. We have a full-dimensional 
simplex in $\RR^2$ given by $\cS = \conv \{ v_0,v_1,v_2 \}$ with vertex set $V = \{ v_0,v_1,v_2 \}$ 
and facets $\cF_0$,$\cF_1$, and $\cF_2$. Each facet $\cF_j$ has an outward normal 
vector $h_j$. The only vertex not in facet $\cF_j$ is vertex $v_j$. $\cF_0$ is the exit
facet. If we assume that $v_0 = 0$, then subspace $\cB$ is shown passing through $v_0$. 
The set $\cO$ is an affine space shown passing through $\cF_0$. Notice in this 
case $\cG = \cS \cap \cO = \conv\{ v_1, v_2 \}$. The cone $\cone(\cS)$ is the cone with apex at $v_0$ 
determined by $\cS$. It is indicated in the figure as the shaded area. 
The cones $\cC(v_i)$, $i = 0,1,2$ are depicted as darker shaded cones attached at each vertex. Of course, the
apex of each $\cC(v_i)$ is at the origin, but we depict it as being attached at the corresponding vertex $v_i$
since it will be used to describe allowable directions for the vector field at the vertices. Notice 
that the cones $\cC(v_1)$ and $\cC(v_2)$ are not tangent cones to $\cS$ at $v_1$ and $v_2$, respectively, whereas
$\cC(v_0)$ is the tangent cone to $\cS$ at $v_0$; hence the distinguished labeling of $\cC(v_0)$ 
as $\cone(\cS)$.
\end{exmp}

\begin{figure}[!t]
\begin{center}
\psfrag{v0}[][][\scale]{$v_0$}
\psfrag{v1}[][][\scale]{$v_1$}
\psfrag{v2}[][][\scale]{$v_2$}
\psfrag{h0}[][][\scale]{$h_0$}
\psfrag{h1}[][][\scale]{$h_1$}
\psfrag{h2}[][][\scale]{$h_2$}
\psfrag{S}[][][\scale]{$\cS$}
\psfrag{O}[][][\scale]{$\cO$}
\psfrag{B}[][][\scale]{$\cB$}
\psfrag{C0}[][][\scale]{$\cC(v_0)$}
\psfrag{C1}[][][\scale]{$\cC(v_1)$}
\psfrag{C2}[][][\scale]{$\cC(v_2)$}
\psfrag{F0}[][][\scale]{$\cF_0$}
\psfrag{F1}[][][\scale]{$\cF_1$}
\psfrag{F2}[][][\scale]{$\cF_2$}
\psfrag{ConeS}[][][\scale]{$\cone(\cS)$}
\includegraphics[width = 0.5\linewidth]{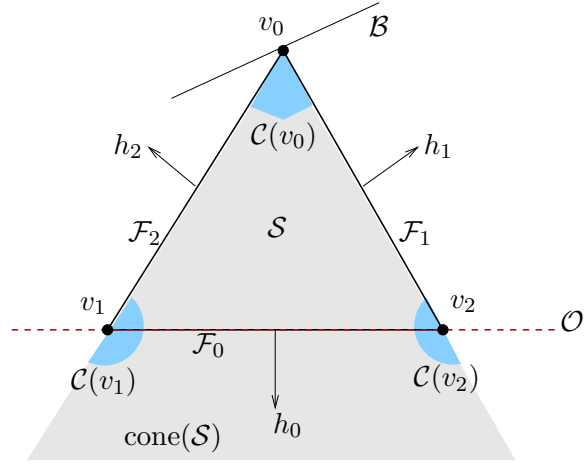}
\caption{Notation for reach control problem.}
\label{fig1}
\end{center}
\end{figure}

We are interested in formulating a problem to make the closed-loop trajectories of 
\eqref{eq:thesystem} exit $\cS$ through the exit facet $\cF_0$ only. 
For this, we require conditions that disallow trajectories to exit from
any other facet $\cF_i$, $i \in I$. 
We say the invariance conditions are solvable at vertex $v_i \in V$ if there exists
$u_i \in \RR^m$ such that 
\begin{equation}
\label{eq:inv}
Av_i + Bu_i + a \in \cC(v_i) \,. 
\end{equation}
We say the invariance conditions are solvable if \eqref{eq:inv} is solvable at each $v_i \in V$. 
The inequalities \eqref{eq:inv} are called {\em invariance conditions}. They guarantee trajectories
cannot exit from the facets $\cF_i$, $i \in I$, and they are used to construct 
affine feedbacks \cite{HVS04}. For general state feedbacks, stronger conditions (also called
invariance conditions) are needed. 
We say a state feedback $u = f(x)$ satisfies the invariance conditions if 
for all $x \in \cS$,
\begin{equation}
\label{eq:inv2}
Ax + Bf(x) + a \in \cC(x) \,. 
\end{equation}

\begin{exmp}
\label{example2}
Consider Figure~\ref{fig2}. Attached at each vertex is a velocity vector 
$y_i := A v_i + B u_i + a$, $i \in \{ 0 \} \cup I$. The invariance conditions \eqref{eq:inv} require that 
$y_i \in \cC(v_i)$, as illustrated. Notice that velocity vectors at $v_i \in \cF_0$ may or may not point
out of $\cS$. If the control is an affine feedback $u = K x + g$ such
that $u_i = K v_i + g$, then by convexity of the closed-loop vector field, \eqref{eq:inv2} 
holds at every $x \in \cF_i$, $i \in I$. If the input is a continuous state feedback $u = f(x)$, 
then invariance conditions for every $x \in \cF_i$, $i \in I$, must be explicitly stated, since convexity
is not guaranteed; hence \eqref{eq:inv2}. 
\end{exmp}

\begin{figure}[!t]
\begin{center}
\psfrag{v0}[][][\scale]{$v_0$} 
\psfrag{v1}[][][\scale]{$v_1$} 
\psfrag{v2}[][][\scale]{$v_2$} 
\psfrag{y0}[][][\scale]{$y_0$}
\psfrag{y1}[][][\scale]{$y_1$} 
\psfrag{y2}[][][\scale]{$y_2$} 
\psfrag{S}[][][\scale]{$\cS$} 
\psfrag{O}[][][\scale]{$\cO$}
\psfrag{C0}[][][\scale]{$\cC(v_0)$}
\psfrag{C1}[][][\scale]{$\cC(v_1)$}
\psfrag{C2}[][][\scale]{$\cC(v_2)$}
\psfrag{F0}[][][\scale]{$\cF_0$}
\psfrag{F1}[][][\scale]{$\cF_1$}
\psfrag{F2}[][][\scale]{$\cF_2$}
\includegraphics[width = 0.4\linewidth]{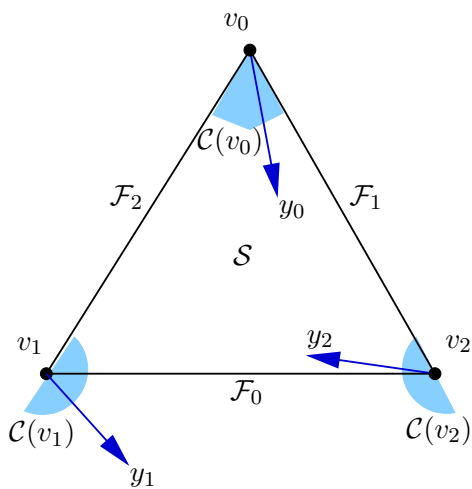}
\caption{The invariance conditions state that $y_i := A v_i + B u_i + a \in \cC(v_i)$ for $i =0,\ldots,n$.}
\label{fig2}
\end{center}
\end{figure}

\begin{prob}[Reach Control Problem (RCP)]
\label{prob0}
Consider system (\ref{eq:thesystem}) defined on $\mathcal{S}$.
Find a state feedback $u = f(x)$ such that:
\begin{enumerate}
\item[(i)] 
For every $x \in \cS$ there exist $T \geq 0 $ and $\gamma > 0$ such that
$\phi_u(t,x) \in \cS$ for all $t \in [0, T]$, $\phi_u(T,x) \in \cF_0$, and
$\phi_u(t,x) \notin \cS$ for all $t \in (T, T+\gamma)$.
\item[(ii)] 
There exists $\varepsilon > 0$ such that for every $x \in \cS$,
$~~\| A x + B f(x) + a \| > \varepsilon$. 
\item[(iii)]
Feedback $u = f(x)$ satisfies the invariance conditions \eqref{eq:inv2}. 
\end{enumerate}
\end{prob}
Condition (i) is the same condition that appears in the standard formulation of RCP \cite{HVS06,RB06}.
It states that all closed-loop trajectories must exit $\cS$ through $\cF_0$ in finite time without
first exiting from another facet. Condition (ii) and (iii) are new, 
and they are introduced to deal with pathologies that can only happen
when using discontinuous feedbacks. It can be shown that if continuous state feedback 
is used, then condition (i) implies conditions (ii) and (iii) \cite{HVS04}.
Therefore, results on affine feedbacks \cite{HVS06,RB06} and continuous state feedbacks 
\cite{MEB10} remain valid.

\begin{exmp}
\label{rem1}
In this example we illustrate the need for condition (ii). Figure~\ref{fig3}(a) illustrates
a 2D simplex $\cS = \conv \{ v_0, v_1, v_2 \}$, where $v_0 = (1,1)$, $v_1 = (0,0)$, and $v_2 = (2,0)$.
We consider the linear system on $\cS$:
\[
\dot{x} = 
\left[
\begin{array}{rr}
2 & -2 \\
1 & -2 
\end{array}
\right] x + 
\left[
\begin{array}{r} 2 \\ 1 \end{array} 
\right] u \,.
\]
Now we select control values $u_0 = 0$, $u_1 = 0$, and $u_2 = -3$ to satisfy the invariance conditions of $\cS$,
and we solve for the feedback $u = -\frac{3}{2}x_1 + \frac{3}{2} x_2$. The closed loop system 
has an equilibrium in $\cS$ at $v_1$ only, and if we compute the time to reach $\cF_0$ from any $x_0 \in \cS$ 
we find it is finite, despite the presence of the equilibrium. 
Now we define a discontinuous piecewise affine feedback $u = f(x)$ given by 
$f(x) := -\frac{3}{2}x_1 + \frac{3}{2} x_2$ for  $x \in \cS \setminus \{ v_1 \}$ and $f(v_1) := -1$. 
Not only do all trajectories reach $\cF_0$ in finite time, they also exit $\cS$ as required by condition (i).
We have a feasible solution to RCP, but it is not structurally stable. If the 
system parameters $( A,B,a )$ are slightly perturbed and we use $u = f(x)$, then there can 
appear an equilibrium $\ol{x}$ of the perturbed system in the interior of $\cS$, 
as shown in Figure~\ref{fig3}(b). Thus, RCP is not solved for the perturbed system. 
Condition (ii) disallows such non-robust solutions. 
\end{exmp}

\begin{exmp}
\label{rem2}
Next consider Figure~\ref{fig3}(c) which represents a second pathological solution to RCP using
discontinuous feedback. Here trajectories reach $\cF_0$ in finite time, and then they slide along $\cF_0$ out of the
simplex along a direction at $v_2$ that violates $v_2$'s invariance conditions. In order
to circumvent this behavior, it is sufficient to disallow feedbacks that violate the invariance
conditions \eqref{eq:inv2}, particularly on $\cF_0$. This is the purpose of condition (iii). 
\end{exmp}

In the sequel we will use the shorthand notation $\cS \overset{\cS}{\longrightarrow} \cF_0$ 
to denote that (i)-(iii) of Problem~\ref{prob0} hold under some control law. 
Finally, we make an important assumption concerning the placement of $\cO$ with respect to $\cS$. 
The reader is referred to \cite{MEB10} for the motivation and a method of triangulation 
of the state space that achieves this assumption. See also \cite{LEE}.
\begin{assum} 
\label{assum1}
Simplex $\cS$ and system \eqref{eq:thesystem} satisfy the following condition:
if $\cG \neq \emptyset$, then $\cG$ is a $\kappa$-dimensional face of $\cS$, 
where $0 \le \kappa \le n$.
\end{assum}

\begin{figure}[!t]
\begin{center}
\psfrag{v0}[][][\scale]{$v_0$} 
\psfrag{v1}[][][\scale]{$v_1$} 
\psfrag{v2}[][][\scale]{$v_2$} 
\psfrag{y1}[][][\scale]{$y_1$} 
\psfrag{S}[][][\scale]{$\cS$} 
\psfrag{A}[][][\scale]{(a)} 
\psfrag{B}[][][\scale]{(b)} 
\psfrag{C}[][][\scale]{(c)} 
\psfrag{xb}[][][\scale]{$\ol{x}$} 
\psfrag{F0}[][][\scale]{$\cF_0$}
\psfrag{F1}[][][\scale]{$\cF_1$}
\psfrag{F2}[][][\scale]{$\cF_2$}
\includegraphics[width = 0.8\linewidth]{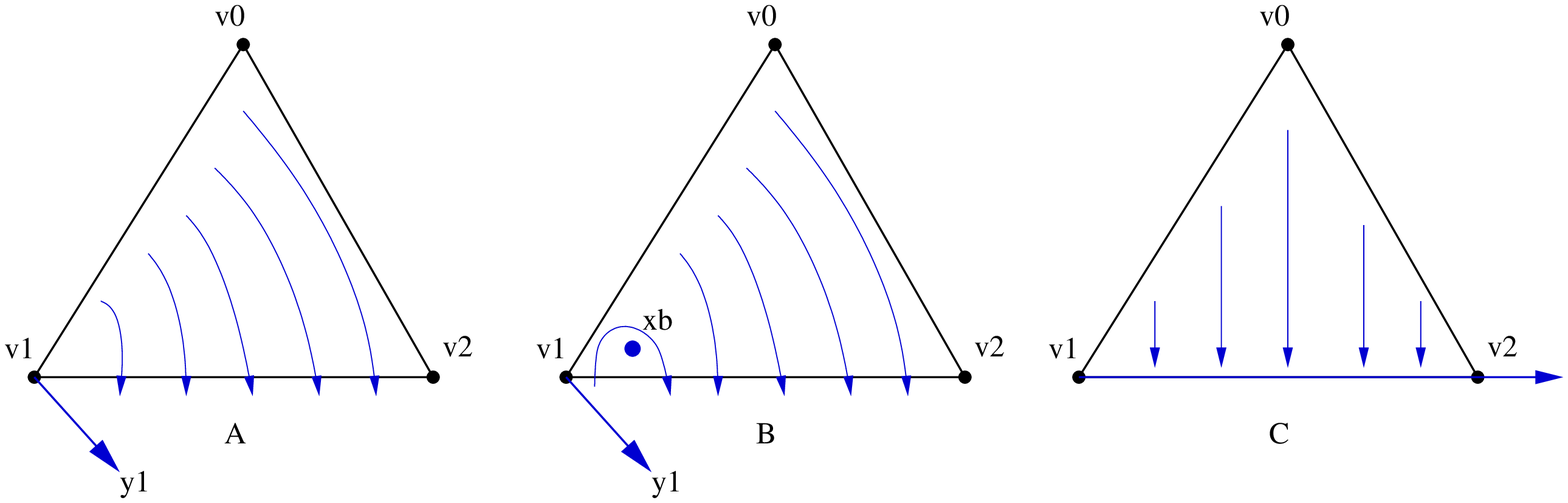}
\caption{Illustration of pathologies that arise using discontinuous feedback to solve RCP.}
\label{fig3}
\end{center}
\end{figure}

\section{Necessary Conditions}
\label{sec:necessary}

In this section we present two necessary conditions for solvability of RCP using open-loop controls.
We take as open-loop controls for \eqref{eq:thesystem} any measurable function 
$\mu: [0,\infty) \rightarrow \RR^m$ that is bounded on compact intervals. 
Now we define what is meant by a solution of RCP by open-loop controls.

\begin{defn}
\label{def:ol}
Consider system (\ref{eq:thesystem}) defined on $\mathcal{S}$.
We say {\em $\cS \overset{\cS}{\longrightarrow} \cF_0$ by open-loop controls}
if there exists a map $T : \cS \rightarrow \RR^+$ and a set of open-loop controls 
$\{ \mu_x ~|~ x \in \cS \}$ such that:
\begin{enumerate}
\item[(i)] 
For every $x \in \mathcal{S}$ there exists $\gamma > 0$ such that
$\phi_{\mu_{x}}(t,x) \in \mathcal{S}$ for all $t \in [0, T(x)]$, 
$\phi_{\mu_{x}}(T(x),x) \in \mathcal{F}_0$, and
$\phi_{\mu_{x}}(t,x) \notin \mathcal{S}$ for all $t \in (T(x), T(x)+\gamma)$.
\item[(ii)] 
There exists $\varepsilon > 0$ such that for every $x \in \cS$ and $t \in [0, T(x)]$,
$\| A \phi_{\mu_{x}}(t,x) + B \mu_{x}(t)  + a \| > \varepsilon$. 
\item[(iii)]
For every $x \in \cS$ and $t \in [0, T(x)]$, 
$(A \phi_{\mu_x}(t,x) + B \mu_x(t) + a) \in \cC({\left(\phi_{\mu_x}(t,x) \right)})$. 
\end{enumerate}
\end{defn}

The first result of the section is that solvability of the invariance conditions \eqref{eq:inv} is necessary
for solvability of RCP by open-loop controls in the sense of condition (i) only. This extends the 
analogous result in \cite{HVS04} on the necessity of the invariance conditions for solvability of RCP (in the
sense of condition (i) only) for continuous state feedbacks. Proofs are in the Appendix.

\begin{thm}
\label{thm:nec1}
If $\cS \overset{\cS}{\longrightarrow} \cF_0$ by open-loop controls in the sense of condition (i) only, 
then the invariance conditions \eqref{eq:inv} are solvable. 
\end{thm}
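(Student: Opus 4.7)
The plan is to construct, for each vertex $v_i$, an input $u_i \in \RR^m$ that realizes the invariance condition $A v_i + B u_i + a \in \cC(v_i)$, using an averaging argument on the open-loop trajectory guaranteed by condition~(i). The limiting averaged velocity at $v_i$ will lie in $\cC(v_i)$ by virtue of the facet geometry of $\cS$ at $v_i$.

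The principal case is $T(v_i) > 0$, which must hold when $v_i \notin \cF_0$ (in particular at $v_i = v_0$). Set $\mu := \mu_{v_i}$, $\phi(t) := \phi_\mu(t, v_i)$, and $T := T(v_i)$. Since $\mu$ is bounded on the compact interval $[0, T]$, the trajectory $\phi$ is Lipschitz, so the averaged velocity $y^{(t)} := (\phi(t) - v_i)/t$ is bounded for $t \in (0, T]$. Along a subsequence $t_k \downarrow 0$, $y^{(t_k)} \to y^*$. Splitting
\[
y^{(t)} = \frac{1}{t}\int_0^t (A\phi(s) + a)\, ds \;+\; B \cdot \frac{1}{t}\int_0^t \mu(s)\, ds,
\]
the first term converges to $A v_i + a$ by continuity of $\phi$, so $B \cdot \frac{1}{t_k}\int_0^{t_k}\mu\, ds$ converges to some $\bar b \in \cB = \Im B$, writable as $\bar b = B u_i$ for some $u_i \in \RR^m$. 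Hence $y^* = A v_i + B u_i + a$. To verify $y^* \in \cC(v_i)$, fix $j \in J_i := \{j \in I : v_i \in \cF_j\}$; since $\phi(t) \in \cS$ and $v_i \in \cF_j$, one has $h_j \cdot (\phi(t) - v_i) \leq 0$, hence $h_j \cdot y^{(t_k)} \leq 0$ for every $k$, and $h_j \cdot y^* \leq 0$ in the limit.

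The remaining case, $v_i \in \cF_0$ with $T(v_i) = 0$, is where the argument requires more care, since the trajectory from $v_i$ itself is degenerate. The idea is to apply the same averaging construction to trajectories from a sequence of interior points $x_k \in \inter \cS$ approaching $v_i$ through the relative interior of $\cS$ (in particular with $x_k \notin \cF_0$, so that $T(x_k) > 0$), extract from each $x_k$ an input $u_k$ with $A x_k + B u_k + a$ in a tangent-cone-like set that tightens to $\cC(v_i)$ in the limit, and then pass to a convergent subsequence of $B u_k$ to obtain $u_i^\ast$ with $A v_i + B u_i^\ast + a \in \cC(v_i)$.

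The main obstacle is controlling the limit as $x_k \to v_i$: ensuring that the averaged controls $B u_k$ remain bounded, and that the half-space constraints $h_j \cdot y_k^* \leq (h_j \cdot v_i - h_j \cdot x_k)/T(x_k)$ pass cleanly to $h_j \cdot y^* \leq 0$ for $j \in J_i$ without degenerating. Assumption~\ref{assum1} on the polyhedral structure of $\cG$ should play a role here by precluding pathological configurations where the averaged controls blow up. A cleaner backup strategy is by contradiction: if invariance fails at $v_i$, Farkas' lemma applied to the disjoint polyhedral sets $A v_i + a + \cB$ and $\cC(v_i)$ produces $h \in \cone\{h_j : j \in J_i\}$ with $h^\top B = 0$ and $h \cdot (A v_i + a) > 0$; the two control-independent facts $h \cdot x \leq h \cdot v_i$ on all of $\cS$ and $h \cdot \dot\phi = h \cdot (A\phi + a)$ then forbid any trajectory from interior points arbitrarily close to $v_i$ from remaining in $\cS$ long enough to satisfy condition~(i), contradicting the hypothesis.
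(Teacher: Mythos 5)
Your treatment of the case $T(v_i)>0$ (in particular $v_0$, and more generally any starting point in $\cS\setminus\cF_0$) is correct and is essentially the paper's argument: bounded difference quotients, a convergent subsequence, a limit velocity lying in $\cY(v_i):=Av_i+a+\cB$ because $\cB$ is closed, and facet inequalities that pass to the limit. Your identification of the limit of $B\cdot\frac{1}{t_k}\int_0^{t_k}\mu\,ds$ as an element of the closed subspace $\cB$ is, if anything, cleaner than the paper's wording.

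The gap is in the remaining case $v_i\in\cF_0$. Both of your strategies approach $v_i$ through \emph{interior} points $x_k$, and that is the wrong approximating set. At an interior point the Bouligand tangent cone is all of $\RR^n$, so the averaging argument yields no constraint to pass to the limit, and the bounds you write, $h_j\cdot y_k^*\le (h_j\cdot v_i-h_j\cdot x_k)/T(x_k)$, have strictly positive numerators with no control on $T(x_k)$. Your backup contradiction fails for a more basic reason: condition (i) imposes no lower bound on $T(x)$, so a trajectory from an interior point near $v_i\in\cF_0$ may legally reach $\cF_0$ and exit in arbitrarily short time. The facts $h\cdot x\le h\cdot v_i$ on $\cS$ and $h\cdot\dot\phi=h\cdot(A\phi+a)>0$ near $v_i$ only bound the time the trajectory can spend near $v_i$; they do not forbid a fast exit through $\cF_0$, so no contradiction with (i) is obtained. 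The missing idea (and the paper's route) is to approach $v_i$ along the open edge $(v_0,v_i)$: such points lie in $\cS\setminus\cF_0$, so your first-part argument applies to them, and their tangent cone is exactly $\cC(v_i)$ (every $h_j$ with $v_i\in\cF_j$ is active with equality along that edge, and $h_0$ is not active), giving $\cY(x)\cap\cC(v_i)\ne\emptyset$ for all $x\in(v_0,v_i)$. Then, assuming $\cY(v_i)\cap\cC(v_i)=\emptyset$ (the subcase $v_i\in\cO$ is trivial since $0$ lies in both sets), strong separation of these two disjoint polyhedral convex sets together with the continuity of $x\mapsto\cY(x)$ forces $\cY(x)\cap\cC(v_i)=\emptyset$ for $x\in(v_0,v_i)$ close to $v_i$, which is the contradiction. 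Your separating functional $h$ is the right object; it must be played against the tangent-cone constraint at edge points, not against the dynamics started from interior points.
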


The second result says that if RCP is solvable by open-loop controls, then it is possible to assign
non-zero velocity vectors satisfying \eqref{eq:inv} at vertices $v_i \in V \cap \cG$. This is an immediate 
consequence of condition (ii). 
We know that $A v_i + a \in \cB$ for vertices $v_i \in \cG$. Theorem~\ref{thm:nec1} says that if
RCP is solvable by open-loop controls (in the sense of condition (i)), then $\cB \cap \cC(v_i) \neq \emptyset$, 
for $v_i \in V \cap \cG$. The next result says that, moreover, the zero vector cannot be the only element of 
$\cB \cap \cC(v_i)$, $v_i \in V \cap \cG$. 

\begin{thm}
\label{thm:nec2}
If $\cS \overset{\cS}{\longrightarrow} \cF_0$ by open-loop controls, then 
$\cB \cap \cC(v_i) \neq \Zero$, $v_i \in V \cap \cG$. 
\end{thm}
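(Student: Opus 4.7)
The plan is to apply conditions (ii) and (iii) of Definition~\ref{def:ol} at the single point $t=0$ to the open-loop control $\mu_{v_i}$ that drives the trajectory started at $v_i$; the desired non-zero element of $\cB \cap \cC(v_i)$ then falls out directly, in keeping with the authors' remark preceding the theorem that the result is ``an immediate consequence of condition (ii).''

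Concretely, fix $v_i \in V \cap \cG$. By hypothesis of open-loop solvability with initial state $v_i$, there is a control $\mu_{v_i}$ producing the trajectory $\phi(t) := \phi_{\mu_{v_i}}(t, v_i)$ with $\phi(0) = v_i$. Evaluating condition (iii) at $t=0$ gives $A v_i + B\mu_{v_i}(0) + a \in \cC(\phi(0)) = \cC(v_i)$, and evaluating condition (ii) at $t=0$ gives $\| A v_i + B \mu_{v_i}(0) + a \| > \varepsilon > 0$. Hence $y := A v_i + B \mu_{v_i}(0) + a$ is non-zero and lies in $\cC(v_i)$.

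It remains to show $y \in \cB$. This is where the hypothesis $v_i \in \cG$ enters: since $\cG = \cS \cap \cO$ and $\cO = \{x \in \RR^n : Ax + a \in \cB\}$, we have $A v_i + a \in \cB$. Since $B\mu_{v_i}(0) \in \Im(B) = \cB$ and $\cB$ is a subspace, the sum $y$ lies in $\cB$. Therefore $y$ is a non-zero element of $\cB \cap \cC(v_i)$, proving $\cB \cap \cC(v_i) \neq \Zero$.

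The only technicality worth flagging is that $\mu_{v_i}$ is only required to be measurable and bounded on compact intervals, so the pointwise value $\mu_{v_i}(0)$ at a single point is not a priori meaningful. However, conditions (ii) and (iii) of Definition~\ref{def:ol} are stated ``for every $t \in [0, T(x)]$'' rather than almost everywhere, so we read them as pertaining to a fixed measurable representative, and the argument simply uses that representative's value at $t=0$. If one instead adopted the a.e.\ reading of (ii) and (iii), the main obstacle would be that $\cC(\phi(t))$ can strictly contain $\cC(v_i)$ for small $t > 0$ (when $\phi(t)$ has left some facets touching $v_i$), so extracting a limit along a sequence $t_n \downarrow 0$ of ``good'' times would not immediately deliver an element of $\cC(v_i)$; the stated pointwise formulation of Definition~\ref{def:ol} sidesteps this issue entirely.
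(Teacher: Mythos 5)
Your proof is correct, and it takes a genuinely different and more economical route than the paper's. You argue directly: evaluating conditions (ii) and (iii) of Definition~\ref{def:ol} at $t=0$ along the trajectory issued from $v_i$ yields a velocity $y = Av_i + B\mu_{v_i}(0) + a$ that is nonzero, lies in $\cC(v_i)$, and---because $v_i \in \cO$ puts the drift $Av_i + a$ in $\cB$---lies in $\cB$ as well, so $y$ itself witnesses $\cB \cap \cC(v_i) \neq \Zero$. The paper instead argues by contradiction: assuming $\cB \cap \cC(v_i) = \Zero$, it cancels the drift at $v_i$ exactly ($Av_i + Bu_i + a = 0$), then examines points $x \in (v_0,v_i)$ close to $v_i$, where the admissible velocity decomposes as a small continuity term plus an element of $\cB$ of norm exceeding $\varepsilon/2$; since $\cB$ and the closed cone $\cC(v_i)$ meet only at the origin, such elements are uniformly bounded away from $\cC(v_i) = T_{\cS}(x)$, contradicting condition (iii). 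What the paper's detour buys is that the velocity constraint is only ever invoked at points in the relative interior of the edge $(v_0,v_i)$ rather than at the vertex itself, and it isolates the quantitative separation fact (elements of $\cB$ of norm $>\varepsilon/2$ have distance $\ge \alpha$ from $\cC(v_i)$) that drives the contradiction; what your version buys is brevity and a constructive witness. Your caveat about pointwise values of a measurable control is well taken, but note that it cuts equally against the paper's own argument, which likewise extracts $u_x = \mu_x(0)$ for each $x$ on the edge; under the literal ``for every $t \in [0,T(x)]$'' phrasing of Definition~\ref{def:ol}, which the paper itself relies on, both proofs are sound.
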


\section{Reach Control Indices}
\label{sec:indices}

The reach control indices are defined in the situation when it is known that RCP
is not solvable by continuous state feedback but it is still solvable by open-loop controls. 
According to the results of \cite{MEB10}, RCP is not solvable by continuous state feedback
under the following assumptions.
\begin{assum} 
\label{assum2}
Simplex $\cS$ and system \eqref{eq:thesystem} satisfy the following conditions.
\begin{enumerate}
\item[(A1)] 
$\cG = \cS \cap \cO = \conv\{ v_1,\ldots,v_{\kappa+1} \}$, with $0 \le \kappa < n$.
\item[(A2)] 
$\cB \cap \cone(\cS) = \Zero$.
\item[(A3)] 
The maximum number of linearly independent vectors in any set 
$\{ b_1,\ldots,b_{\kappa+1} ~|~ b_i \in \cB \cap \cC(v_i) \}$
(with only one vector for each $\cB \cap \cC(v_i)$) is $\mh$ with $0 \le \mh < \kappa + 1$. 
\item[(A4)]
$\cB \cap \cC(v_i) \neq \Zero$, $i = 1,\ldots,\kappa+1$. 
\end{enumerate}
\end{assum}

Assumption (A1) restricts $\cG$ to be a face in $\cF_0$. The other cases arising under Assumption~\ref{assum1} 
when $\cG = \emptyset$ or $v_0 \in \cG$ are trivially solvable (Theorems 6.1 and Remark 7.1 of \cite{MEB10}). 
If either (A2) or (A3) does not hold, then RCP is solved by affine feedback (Theorems 6.2 and 6.7 of \cite{MEB10}). 
Assumption (A4) is no loss of generality due to Theorem~\ref{thm:nec2}. 

\begin{exmp}
Consider Figure~\ref{fig1}. We have $\cG = \conv \{ v_1, v_2 \}$, which
satisfies (A1). Notice (A1) is a strengthening of 
Assumption~\ref{assum1} - it imposes that $v_0 \not\in \cG$; otherwise RCP is not solvable \cite{MEB10}. 
(A2) is also illustrated in Figure~\ref{fig1}. At $v_0$, $\cB$ has no vectors
in common with $\cone(\cS)$ except the zero vector. 
Next, we see that (A3) is satisfied with $\mh = m = 1$. In particular, 
$b_1 \in \cB \cap \cC(v_1)$ and $b_2 \in \cB \cap \cC(v_2)$ are linearly dependent. 
Note also that (A3) specifies that $\mh < \kappa + 1$. 
If $\mh = \kappa + 1$, then RCP is solvable by affine feedback \cite{MEB10}. 
Finally (A4) is taken from Theorem~\ref{thm:nec2}. It says that at each vertex in $\cG$, there exists 
a non-zero $b_i \in \cB$ satisfying the invariance conditions of $v_i$ for $i = 1,2$. 
\end{exmp}

Consider (A3). Select any $b_i \in \cB \cap \cC(v_i)$, $i = 1,\ldots,\kappa+1$ and write the
list $\{ b_1,\ldots,b_{\kappa+1} \}$. Clearly there exists a list with a maximum number $\mh$ 
of linearly independent vectors. W.l.o.g., we reorder the indices $\{ 1,\ldots,\kappa+1\}$ 
(leaving the indices $0,\kappa+2,\ldots,n$ the same) 
so that $\{ b_1,\ldots,b_{\mh} \}$ are linearly independent. 
By (A3), $\mh < \kappa + 1$ so we can define $p \ge 1$ as 
\[
p := \kappa + 1 - \mh \,.
\]
Notice by the maximality of $\{ b_1,\ldots,b_{\mh} \}$, for each $i = \mh+1,\ldots,\kappa+1$ and for each
$b_i \in \cB \cap \cC(v_i)$, $b_i \in \spn \{ b_1,\ldots,b_{\mh} \}$.
Now consider the cone $\cB \cap \cC(v_{\mh+1})$. 
By (A4), $\cB \cap \cC(v_{\mh+1}) \neq \Zero$, 
so there exists $2 \le r_1 \le \mh+1$ such that w.l.o.g. (reordering indices $1,\ldots,\mh$),
$\cB \cap \cC(v_{\mh+1}) \subset \spn \{ b_1,\ldots,b_{r_1-1} \}$
and $\spn \{ b_1,\ldots,b_{r_1-1} \}$ is the smallest subspace generated by basis vectors among
$\{ b_1,\ldots,b_{\mh} \}$ only that contains the cone $\cB \cap \cC(v_{\mh+1})$. 

\begin{lem}
There exists a unique minimal subspace containing $\cB \cap \cC(v_{\mh+1})$ and generated by the basis
$\{ b_1,\ldots,b_{\mh} \}$.
\end{lem}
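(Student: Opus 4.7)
The plan is to consider the finite collection $\mathcal{L}$ of subspaces of the form $W_J := \spn\{b_j : j \in J\}$, indexed by $J \subseteq \{1,\ldots,\mh\}$, that contain the cone $\cB \cap \cC(v_{\mh+1})$, and to show that $\mathcal{L}$ has a (necessarily unique) minimum element with respect to inclusion. First I would note that $\mathcal{L}$ is nonempty: by the remark immediately preceding the lemma, the full span $\spn\{b_1,\ldots,b_{\mh}\}$ contains every element of $\cB \cap \cC(v_{\mh+1})$ by virtue of the maximality of $\mh$ together with assumption (A3), so the total index set $J = \{1,\ldots,\mh\}$ witnesses $\mathcal{L} \neq \emptyset$.

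The technical crux, and the step that requires care, is an intersection identity for these coordinate subspaces:
\[
W_J \cap W_K \;=\; W_{J \cap K} \qquad \text{for all } J,K \subseteq \{1,\ldots,\mh\}.
\]
The inclusion $W_{J \cap K} \subseteq W_J \cap W_K$ is immediate. For the reverse, I would take $v \in W_J \cap W_K$, expand $v = \sum_{j \in J} \alpha_j b_j = \sum_{k \in K} \beta_k b_k$, and invoke linear independence of $\{b_1,\ldots,b_{\mh}\}$: the two expressions are two representations of $v$ in a common linearly independent set (by setting missing coefficients to zero), so uniqueness of coordinates forces $\alpha_j = 0$ for $j \notin K$ and $\beta_k = 0$ for $k \notin J$, yielding $v \in W_{J\cap K}$. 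This is the only place the hypothesis that $\{b_1,\ldots,b_{\mh}\}$ is a linearly independent basis is used in an essential way.

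With the intersection identity in hand, closure of $\mathcal{L}$ under pairwise intersection is automatic, for if $W_J, W_K \in \mathcal{L}$ then both contain $\cB \cap \cC(v_{\mh+1})$, whence so does $W_J \cap W_K = W_{J \cap K}$. Because the indexing is over the finite power set of $\{1,\ldots,\mh\}$, the full intersection $W_\star := \bigcap_{W \in \mathcal{L}} W$ is itself of the form $W_{J^\star}$ for a uniquely determined $J^\star \subseteq \{1,\ldots,\mh\}$, still contains $\cB \cap \cC(v_{\mh+1})$, and by construction is contained in every member of $\mathcal{L}$. Thus $W_\star$ is the unique minimal subspace generated by a subset of $\{b_1,\ldots,b_{\mh}\}$ that contains $\cB \cap \cC(v_{\mh+1})$, as required.
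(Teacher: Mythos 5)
Your proof is correct. The paper actually states this lemma without any proof (it is not among the lemmas proved in the Appendix), so there is nothing to compare against; your argument supplies the natural justification. The key point is exactly the one you isolate: the family of subspaces spanned by subsets of $\{b_1,\ldots,b_{\mh}\}$ and containing $\cB\cap\cC(v_{\mh+1})$ is nonempty (by the maximality statement from (A3) preceding the lemma) and closed under intersection, because linear independence of $\{b_1,\ldots,b_{\mh}\}$ gives $W_J\cap W_K=W_{J\cap K}$; a finite, nonempty, intersection-closed family of subspaces has a unique minimum. One stylistic remark: uniqueness of the minimal element is where the intersection identity genuinely earns its keep --- without it one would only get minimal elements, not a minimum --- and your write-up makes that dependence explicit, which is more than the paper does.
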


In order to have consecutive indices, it is useful to renumber the vertices of $\cG$ to effectively 
swap the indices $\mh+1$ and $r_1$, so we get 
\begin{equation}
\label{eq:coner1a}
\cB \cap \cC(v_{r_1}) \subset \spn \{ b_1, \ldots, b_{r_1-1} \} \,.
\end{equation}
The following establishes that one can always find a vector in 
$\cB \cap \cC(v_{r_1})$ that depends on all the vectors in $\{ b_1,\ldots,b_{r_1-1} \}$.

\begin{lem}[\cite{NOLCOS10}]
\label{lem4}
Suppose Assumption~\ref{assum2} and \eqref{eq:coner1a} hold. 
There exists $\ol{b}_{r_1} \in \cB \cap \cC(v_{r_1})$ such that
\begin{equation}
\label{eq:coner1b}
\ol{b}_{r_1} = c_1 b_1 + \cdots + c_{r_1-1} b_{r_1-1} \,, \qquad c_i \neq 0, ~~i = 1,\ldots,r_1-1 \,.
\end{equation}
\end{lem}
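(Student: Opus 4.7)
The plan is to combine the minimality property of $\spn\{b_1,\ldots,b_{r_1-1}\}$ asserted in the paragraph preceding the lemma with the fact that $K := \cB \cap \cC(v_{r_1})$ is a convex cone (being the intersection of the subspace $\cB$ with the polyhedral cone $\cC(v_{r_1})$). Each $y \in K$ has unique coordinates $(c_1,\ldots,c_{r_1-1})$ in the basis $\{b_1,\ldots,b_{r_1-1}\}$ by \eqref{eq:coner1a}, and the goal is to produce a single element whose coordinates are all nonzero.

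First I would establish a coordinate-by-coordinate existence claim: for each $j \in \{1,\ldots,r_1-1\}$ there is some $y_j \in K$ whose $j$-th coordinate is nonzero. If this failed for some $j$, then every $y \in K$ would have $c_j = 0$, so $K \subset \spn\{b_1,\ldots,b_{j-1},b_{j+1},\ldots,b_{r_1-1}\}$, a subspace generated by a proper subset of the distinguished basis $\{b_1,\ldots,b_{\mh}\}$. That would contradict the minimality of $\spn\{b_1,\ldots,b_{r_1-1}\}$ stated just before the lemma.

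Next I would construct $\ol{b}_{r_1}$ as a positive combination $\ol{b}_{r_1}(\lambda) := \sum_{j=1}^{r_1-1} \lambda_j y_j$ with $\lambda_j > 0$. Since $K$ is a convex cone it is closed under such combinations, so $\ol{b}_{r_1}(\lambda) \in K$ for every such $\lambda$. Writing $y_j = \sum_k \alpha_{jk} b_k$, with $\alpha_{jj} \neq 0$ by the previous step, the $k$-th coordinate of $\ol{b}_{r_1}(\lambda)$ is the linear functional $L_k(\lambda) = \sum_j \lambda_j \alpha_{jk}$, which is nontrivial because its coefficient on $\lambda_k$ equals $\alpha_{kk} \neq 0$. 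Hence each zero set $\{\lambda : L_k(\lambda)=0\}$ is a proper hyperplane in $\RR^{r_1-1}$, and the finite union $\bigcup_{k=1}^{r_1-1}\{L_k = 0\}$ does not cover the open orthant $\{\lambda : \lambda_j > 0,\ j=1,\ldots,r_1-1\}$. Selecting any $\lambda$ in the complement produces the required $\ol{b}_{r_1}$ with all $c_i \neq 0$.

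The main obstacle is modest but conceptual: one must apply the minimality step against subspaces generated by proper subsets of the distinguished basis $\{b_1,\ldots,b_{\mh}\}$, not against arbitrary linear subspaces of $\cB$, since $\spn(K)$ itself need not be basis-generated. Once that is handled, the remainder is the standard observation that finitely many proper hyperplanes cannot exhaust the positive orthant of $\RR^{r_1-1}$.
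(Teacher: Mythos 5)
Your proof is correct. The paper itself does not reprove Lemma~\ref{lem4} --- it is quoted from \cite{NOLCOS10} and its proof is not among those in the Appendix --- so there is no in-paper argument to compare against; but your argument is complete and is the natural one. The two ingredients are exactly right: (i) minimality of $\spn\{b_1,\ldots,b_{r_1-1}\}$ among subspaces generated by subsets of the distinguished basis forces, for each $j$, some $y_j \in \cB \cap \cC(v_{r_1})$ with nonzero $j$-th coordinate (if not, the cone would sit inside the strictly smaller basis-generated subspace obtained by deleting $b_j$, and linear independence of the $b_i$ guarantees the containment is strict); and (ii) since $\cB \cap \cC(v_{r_1})$ is a convex cone, any positive combination $\sum_j \lambda_j y_j$ stays in it, and the finitely many hyperplanes $\{L_k(\lambda)=0\}$, each proper because $\alpha_{kk}\neq 0$, cannot cover the open positive orthant. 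Your closing caveat --- that minimality is only asserted relative to basis-generated subspaces, not arbitrary subspaces of $\cB$ --- is exactly the right point of care, and it is all the argument needs.
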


We now have a list
\begin{equation}
\label{eq:rci2}
\{ b_1,\ldots,b_{r_1-1},\ol{b}_{r_1},b_{r_1+1},\ldots,b_{\mh+1} \} \,.
\end{equation}
The overbar on $\ol{b}_{r_1}$ reminds us that it depends on all the previous $r_1-1$ vectors 
in the list. For this reason, with $b_{r_1} := \ol{b}_{r_1}$, 
$\{ b_2, \ldots, b_{\mh+1} \}$ are linearly independent. 
The next result is a direct implication of condition (A2).

\begin{lem}
\label{lem:cneg}
Suppose Assumption~\ref{assum2} and \eqref{eq:coner1a}-\eqref{eq:coner1b} hold. 
Then the coefficients in \eqref{eq:coner1b} satisfy $c_i < 0$, $i = 1,\ldots,r_1-1$.
\end{lem}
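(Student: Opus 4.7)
My approach is to argue by contradiction: if some coefficient $c_j$ were positive, I would use the expansion of $\ol{b}_{r_1}$ to manufacture a nonzero element of $\cB \cap \cone(\cS)$, violating (A2).

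A preliminary observation drives everything. For each $i \in \{1,\ldots,r_1-1\}$ I would first establish $h_i \cdot b_i > 0$. Since $b_i \in \cC(v_i)$ the invariance conditions give $h_j \cdot b_i \leq 0$ for every $j \in I \setminus \{i\}$; if additionally $h_i \cdot b_i \leq 0$, then $b_i$ would lie in $\cC(v_0) = \cone(\cS)$. But $b_i \in \cB$ and $b_i \neq 0$ (because $\{b_1,\ldots,b_{\mh}\}$ is linearly independent and $r_1 - 1 \leq \mh$), which would contradict (A2).

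Now assume for contradiction that $J := \{ i \in \{1,\ldots,r_1-1\} : c_i > 0 \}$ is nonempty. Split the expansion of $\ol{b}_{r_1}$ according to the sign of its coefficients,
\[
u \;:=\; \sum_{i \in J} c_i b_i, \qquad u' \;:=\; \sum_{i\,:\,c_i < 0} c_i b_i, \qquad \ol{b}_{r_1} = u + u'.
\]
Then $u \in \cB$, and by linear independence of $\{b_i : i \in J\} \subset \{b_1,\ldots,b_{\mh}\}$ together with $c_i > 0$ on $J$, we have $u \neq 0$. The strategy is to verify $h_j \cdot u \leq 0$ for every $j \in I$, which places $u$ in $\cB \cap \cone(\cS) \setminus \Zero$ and contradicts (A2); hence $J = \emptyset$, and combined with $c_i \neq 0$ from Lemma~\ref{lem4}, every $c_i < 0$.

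The case $j \notin J$ is immediate: $j \neq i$ for each $i \in J$ regardless of whether $j \leq r_1 - 1$ or $j \geq r_1$, so $h_j \cdot b_i \leq 0$ and every term in $h_j \cdot u$ is non-positive. The main obstacle is the case $j \in J$, where $h_j \cdot b_j > 0$ makes the sign of $h_j \cdot u$ ambiguous term by term. Here I would rewrite $h_j \cdot u = h_j \cdot \ol{b}_{r_1} - h_j \cdot u'$: the first term is $\leq 0$ since $\ol{b}_{r_1} \in \cC(v_{r_1})$ and $j \neq r_1$, while in the second, each index $i$ appearing in $u'$ satisfies $c_i < 0$ and $i \neq j$ (because $c_j > 0$), so $h_j \cdot b_i \leq 0$ and $c_i(h_j \cdot b_i) \geq 0$, giving $h_j \cdot u' \geq 0$ and therefore $h_j \cdot u \leq 0$. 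This sign-chase, which plays the invariance conditions for $\ol{b}_{r_1}$ off against those for the negative-coefficient $b_i$, is the crux of the argument.
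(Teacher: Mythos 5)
Your proof is correct and takes essentially the same route as the paper's: both isolate the positive-coefficient part of the expansion (your $u$ is the paper's $\beta$), show $h_j \cdot u \le 0$ for all $j \in I$ by switching between the two representations of that vector according to whether $j$ carries a positive coefficient, and then invoke (A2). The only differences are cosmetic --- you use the index set $J$ where the paper reorders indices, and your preliminary observation that $h_i \cdot b_i > 0$ is motivational rather than logically necessary.
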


\begin{proof}
Suppose w.l.o.g. (by reordering indices $\{ 1,\ldots,r_1-1\}$), there exists $1 \le \rho < r_1-1$
such that $c_i > 0$ for $i = 1,\ldots,\rho$ and $c_i <0$ for $i = \rho+1,\ldots,r_1-1$. 
Consider the vector
$\beta := \ol{b}_{r_1} - c_{\rho+1} b_{\rho+1} - \cdots - c_{r_1-1} b_{r_1-1} 
      = c_1 b_1 + \cdots + c_{\rho} b_{\rho}$. 
Notice that $\beta \neq 0$ since $\{ b_1,\ldots,b_{\rho} \}$ are linearly independent. 
Since $b_i \in \cB \cap \cC(v_i)$, $i \in \{1,\ldots,r_1\}$, we have
$h_j \cdot \beta = h_j \cdot \bigl( \ol{b}_{r_1} - c_{\rho+1} b_{\rho+1} - \cdots - c_{r_1-1} b_{r_1-1} \bigr) \le 0$, 
$j = 1,\ldots,\rho,r_1+1,\ldots,n$. 
Also $h_j \cdot \beta = h_j \cdot \bigl( c_1 b_1 + \cdots + c_{\rho} b_{\rho} \bigr) \le 0$, 
$j = \rho+1,\ldots,n$. 
In sum, $h_j \cdot \beta \le 0$, $i \in I$; that is, $\beta \in \cB \cap \cone(\cS)$. By Assumption (A2),
$\beta = 0$, a contradiction.
\end{proof}

\begin{rem}
An notable feature of Lemma~\ref{lem:cneg} is that any $b_i$, $i = 1,\ldots,r_1$, can be expressed as a negative
linear combination of the remaining vectors $\{ b_1,\ldots,b_{i-1},b_{i+1},\ldots,b_{r_1} \}$. This means we
may renumber indices within $\{ 1,\ldots, r_1 \}$ with impunity, as the formula \eqref{eq:coner1b} will still 
hold with strictly negative coefficients. Such a renumbering will be invoked in Lemma~\ref{lem10} of the next 
section.
\end{rem}

At this point we have $r_1$ cones $\cB \cap \cC(v_i)$, $i = 1,\ldots,r_1$, and we have a selection 
$\{ b_1,\ldots,b_{r_1} ~|~ b_i \in \cB \cap \cC(v_i) \}$ with the property that any $r_1-1$ vectors in
the selection is linearly independent, and each vector of the selection is a strictly negative linear combination
of the others. This situation creates strong restrictions on $\cB$. Indeed for the vectors 
$\{ b_1,\ldots,b_{r_1} \}$ to meet these properties and to lie in their respective cones, they 
have a special geometric relationship with $\cS$, as described next. 

\begin{lem}
\label{lem1}
Suppose Assumption~\ref{assum2} and \eqref{eq:coner1a}-\eqref{eq:coner1b} hold. Then 
\begin{align}
\label{eq:bi} 
h_j \cdot b_i = 0 \,, \qquad &i = 1, \ldots, r_1 \,, \qquad 
                              j \in I \setminus \{ 1, \ldots, r_1 \}  \,.
\end{align}
\end{lem}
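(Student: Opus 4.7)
The plan is to exploit the linear dependence
$b_{r_1}=c_1b_1+\cdots+c_{r_1-1}b_{r_1-1}$ with all $c_k<0$ from Lemma~\ref{lem:cneg}, combined with the observation in the remark that the indices $\{1,\ldots,r_1\}$ may be permuted freely, yielding for each $i\in\{1,\ldots,r_1\}$ an expansion
\[
b_i = \sum_{k\in\{1,\ldots,r_1\},\,k\ne i} d_k^{(i)}\, b_k\,,\qquad d_k^{(i)}<0\,.
\]
This ``every $b_i$ is a strictly negative combination of the others'' is the crucial algebraic input.

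Next I would fix an arbitrary $j\in I\setminus\{1,\ldots,r_1\}=\{r_1+1,\ldots,n\}$ and observe a one-sided inequality: for every $k\in\{1,\ldots,r_1\}$ we have $k\ne j$, so $v_k\in\cF_j$, and the invariance condition $b_k\in\cC(v_k)$ gives $h_j\cdot b_k\le 0$. In particular $h_j\cdot b_i\le 0$ for the $i$ of interest.

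Now plug the expansion of $b_i$ into $h_j\cdot b_i$:
\[
h_j\cdot b_i \;=\; \sum_{k\ne i} d_k^{(i)}\,(h_j\cdot b_k)\,.
\]
Each summand is the product of a strictly negative scalar $d_k^{(i)}$ and a non-positive scalar $h_j\cdot b_k$, hence non-negative, so $h_j\cdot b_i\ge 0$. Combining with the previous step forces $h_j\cdot b_i=0$. Since $i\in\{1,\ldots,r_1\}$ and $j\in I\setminus\{1,\ldots,r_1\}$ were arbitrary, the conclusion \eqref{eq:bi} follows.

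I do not expect a serious obstacle here; the argument is a squeeze between an invariance-cone inequality and a sign-propagation inequality through a strictly negative dependence relation. The only subtle point is making sure that the expansion with \emph{all} coefficients strictly negative is available for \emph{every} $i\in\{1,\ldots,r_1\}$, not just $i=r_1$; this is exactly the content of the remark following Lemma~\ref{lem:cneg}, which reduces the matter to renumbering within $\{1,\ldots,r_1\}$ and reapplying \eqref{eq:coner1b}.
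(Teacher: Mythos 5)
Your proof is correct and uses essentially the same mechanism as the paper: squeezing $h_j\cdot b_i$ between the invariance inequality $h_j\cdot b_i\le 0$ and the non-negativity forced by the strictly negative coefficients of Lemma~\ref{lem:cneg}. The paper's version is slightly more economical—it applies the squeeze once to $b_{r_1}$, notes that $\sum_k c_k(h_j\cdot b_k)$ is simultaneously non-positive and a sum of non-negative terms, and so concludes every term vanishes for all $i$ at once—whereas you re-derive a negative expansion for each $i$ via the permutation remark, but the underlying idea is identical and your argument is sound.
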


\begin{proof}
Let $b_{r_1}$ be as in \eqref{eq:coner1b}. Since $b_{r_1} \in \cB \cap \cC(v_{r_1})$, 
$h_j \cdot b_{r_1} = h_j \cdot \bigl( c_1 b_1 + \cdots + c_{r_1-1} b_{r_1-1} \bigr) \le 0$, 
$j \in I \setminus \{ 1,\ldots,r_1 \}$. 
Since $b_i \in \cB \cap \cC(v_i)$ and, by Lemma~\ref{lem:cneg}, $c_i < 0$, 
every term in the sum is non-negative. The result immediately follows. 
\end{proof}

\begin{figure}[!t]
\begin{center}
\psfrag{v0}[][][\scale]{$v_0$}
\psfrag{v1}[][][\scale]{$v_1$}
\psfrag{v2}[][][\scale]{$v_2$}
\psfrag{v3}[][][\scale]{$v_3$}
\psfrag{b1}[][][\scale]{$b_1$}
\psfrag{b2}[][][\scale]{$b_2$}
\psfrag{h1}[][][\scale]{$h_1$}
\psfrag{h3}[][][\scale]{$h_3$}
\psfrag{B}[][][\scale]{$\cB$}
\psfrag{S}[][][\scale]{$\cS$}
\psfrag{G}[][][\scale]{$\cG$}
\psfrag{o}[][][\scale]{$\cO$}
\psfrag{cone}[][][\scale]{$\cone(\cS)$}
\includegraphics[width= 0.5 \linewidth]{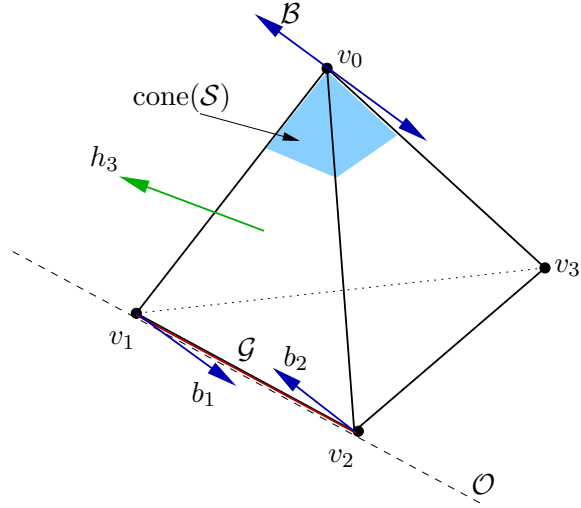}
\caption{Illustration for Lemma~\ref{lem1}.\label{figCSF2}}
\end{center}
\end{figure}

\begin{exmp}
Lemmas~\ref{lem:cneg} and \ref{lem1} are illustrated for a  3D example in Figure~\ref{figCSF2}. We have
$\cS = \conv \{ v_0,\ldots,v_3 \}$, $\cG = \cS \cap \cO = \conv \{ v_1, v_2 \}$, and with $v_0 = 0$
we see that $\cB \cap \cone(\cS) = \Zero$. Also, $\mh = m = 1$. Vector $b_i$ shown attached at $v_i$
lies in the cone $\cB \cap \cC(v_i)$, $i = 1,2$. Now we observe that $b_2 = -c_1 b_1$, $c_1 > 0$, to satisfy 
$b_i \in \cB \cap \cC(v_i)$. This is the content of Lemma~\ref{lem:cneg}. Second, we observe from the figure
that the only way $b_i \in  \cB \cap \cC(v_i)$, $i = 1,2$, can hold simultaneously is if 
$h_3 \cdot b_i = 0$, $i =1,2$. That is, $b_1$ and $b_2$ lie in the 2D plane containing
$\cF_3$. This is the content of Lemma~\ref{lem1}. 
\end{exmp}

Next we consider the cone $\cB \cap \cC(v_{\mh+2})$. Proceeding as above, there exists a smallest subspace 
generated by the basis $\{ b_2,\ldots,b_{\mh+1} \}$ that contains $\cB \cap \cC(v_{\mh+2})$. 
By independently reordering each index set $\{ 2,\ldots,r_1 \}$ and $\{ r_1+1,\ldots,\mh+1 \}$ we have
$\cB \cap \cC(v_{\mh+2}) \subset \spn \{ b_{\rho},\ldots,b_{\rho+r_2-2} \}$,
for some $2 \le \rho \le r_1+1$ and $\rho \le \rho + r_2 - 2 \le \mh+1$. 
Lemmas~\ref{lem4} and \ref{lem:cneg} can be adapted for $\cB \cap \cC(v_{\mh+2})$ 
since we have exactly the same situation as for $\cB \cap \cC(v_{r_1})$, only the indices are changed.
Thus, we get 
\begin{align}
\label{eq:coner2a}
\cB \cap \cC(v_{\mh+2}) & \subset \spn \{ b_{\rho}, \ldots, b_{\rho+r_2-2} \} \\
\label{eq:coner2b}
(\exists \ol{b}_{\mh+2} \in \cB \cap \cC(v_{\mh+2})) & \quad 
\ol{b}_{\mh+2} = c_{\rho} b_{\rho} + \cdots + c_{\rho+r_2-2} b_{\rho+r_2-2} \,, ~~c_i < 0 \,.
\end{align}
We can similarly invoke Lemma~\ref{lem1} to obtain
\begin{align}
\label{eq:bi2} 
h_j \cdot b_i = 0 \,, \quad &i = \rho, \ldots, \rho+r_2-2, \mh+2 \,, \quad 
                              j \in I \setminus \{ \rho, \ldots, \rho+r_2-2, \mh+2 \}  \,.
\end{align}

At this point we know $\rho \le r_1+1$. Next we show that actually $\rho = r_1 + 1$.
This means that the lists $\{ b_1,\ldots,b_{r_1} \}$ and $\{ b_{\rho},\ldots,b_{\rho+r_2-2} \}$
have no vectors in common.  The ensuing proof is facilitated by $\Ms$-matrices \cite{BERMAN}. 
Let $1 \le \alpha \le \beta \le \kappa+1$, $b_i \in \cB \cap \cC(v_i)$, 
and define $H_{\alpha,\beta} := [h_{\alpha} \cdots h_{\beta}]$, 
$Y_{\alpha,\beta} := [b_{\alpha} \cdots b_{\beta}]$, and 
$M_{\alpha,\beta} := H^T_{\alpha,\beta} Y_{\alpha,\beta}$. 
A matrix $M$ is a $\Zs$-matrix if the off-diagonal elements are non-positive; 
i.e. $m_{ij} \le 0$ for all $i \neq j$. A matrix $M$ is {\em monotone} if
$M c \preceq 0$ implies $c \preceq 0$. 
A $\Zs$-matrix $M$ is a nonsingular $\Ms$-matrix if it is monotone. 

\begin{lem}
\label{lem2}
Suppose Assumption~\ref{assum2} and \eqref{eq:coner1a}-\eqref{eq:coner1b} hold. 
Also suppose $\rho < r_1 + 1$. Then $M_{\rho,r_1}$ is a nonsingular $\Ms$-matrix.
\end{lem}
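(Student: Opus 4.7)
The plan is to prove the stronger statement that the full matrix $M_{1,r_1}$ is an \emph{irreducible} singular $\Ms$-matrix, and then to invoke the classical structure theorem (see \cite{BERMAN}, Chap.~6) that every proper principal submatrix of such a matrix is a nonsingular $\Ms$-matrix. Because the construction gives $2\le\rho\le r_1+1$ and the lemma hypothesizes $\rho<r_1+1$, we will have $2\le\rho\le r_1$, so $M_{\rho,r_1}$ is a proper principal submatrix of $M_{1,r_1}$ on indices $\{\rho,\ldots,r_1\}$, and the lemma follows immediately.

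First I would check that $M_{1,r_1}$ is a $\Zs$-matrix: for $i\ne j$ in $\{1,\ldots,r_1\}$, the entry $h_i\cdot b_j$ is $\le 0$ since $b_j\in\cB\cap\cC(v_j)$. Next, define $\lambda:=(-c_1,\ldots,-c_{r_1-1},1)^T$ with the $c_i$ from \eqref{eq:coner1b}; Lemma~\ref{lem:cneg} gives $\lambda\succ 0$, and \eqref{eq:coner1b} rearranges to $Y_{1,r_1}\lambda=0$, hence $M_{1,r_1}\lambda=0$. Decomposing $M_{1,r_1}=sI-N$ with $s:=\max_i(M_{1,r_1})_{ii}$ makes $N\succeq 0$ entrywise, and $N\lambda=s\lambda$ with $\lambda\succ 0$ forces $s=\rho(N)$ via Perron--Frobenius; this identifies $M_{1,r_1}$ as a singular $\Ms$-matrix.

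The main obstacle is irreducibility of $M_{1,r_1}$, which I would argue by contradiction. Suppose there is a nonempty proper $S\subsetneq\{1,\ldots,r_1\}$ producing the block-triangular zero pattern $h_i\cdot b_j=0$ for all $i\in S$, $j\in S^c$, where $S^c:=\{1,\ldots,r_1\}\setminus S$. Set
\[
y \,:=\, \sum_{j\in S^c}\lambda_j b_j \,=\, -\sum_{j\in S}\lambda_j b_j,
\]
the second identity coming from $Y_{1,r_1}\lambda=0$. The key step is to show $-y\in\cB\cap\cone(\cS)$. For $k\in S$, the first representation and the reducibility pattern give $h_k\cdot y=0$. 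For $k\in S^c$, the second representation together with $b_j\in\cC(v_j)$ (and $k\ne j$ since $S\cap S^c=\emptyset$) yields $h_k\cdot(-y)=\sum_{j\in S}\lambda_j(h_k\cdot b_j)\le 0$. For $k\in I\setminus\{1,\ldots,r_1\}$, Lemma~\ref{lem1} gives $h_k\cdot b_j=0$ for every $j\in\{1,\ldots,r_1\}$, so $h_k\cdot y=0$. Hence $-y\in\cone(\cS)\cap\cB=\Zero$ by (A2), forcing $y=0$. Then $\sum_{j\in S^c}\lambda_j b_j=0$ with all $\lambda_j>0$; but $|S^c|\le r_1-1$, while any $r_1-1$ vectors from $\{b_1,\ldots,b_{r_1}\}$ are linearly independent (per the remark following Lemma~\ref{lem:cneg}), yielding the desired contradiction. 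The structure theorem then closes out the proof.
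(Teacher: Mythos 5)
Your proof is correct, but it takes a genuinely different route from the paper's. The paper works directly with the submatrix $M_{\rho,r_1}$: it checks the $\Zs$-property, then proves \emph{monotonicity} by taking any $c$ with $M_{\rho,r_1}c\preceq 0$, forming $y=Y_{\rho,r_1}c$, showing $y\in\cB\cap\cone(\cS)$ (using Lemma~\ref{lem1} together with the relations \eqref{eq:bi2} for the second group of cones to kill the components $h_j\cdot y$ for $j<\rho$ and $j>r_1$), concluding $y=0$ by (A2) and $c=0$ by linear independence, and finally invoking the characterization ($N_{39}$) in Theorem~6.2.3 of \cite{BERMAN}. You instead prove the stronger structural fact that the \emph{full} matrix $M_{1,r_1}$ is an irreducible singular $\Ms$-matrix --- the coefficient vector $\lambda$ built from \eqref{eq:coner1b} and Lemma~\ref{lem:cneg} is precisely a positive null vector, Perron--Frobenius pins down the spectral radius, and irreducibility follows from the same mechanism of exhibiting a candidate element of $\cB\cap\cone(\cS)$ that (A2) forces to vanish, contradicting independence of any $r_1-1$ of the $b_i$ --- and then quote the classical theorem that every proper principal submatrix of an irreducible singular $\Ms$-matrix is a nonsingular $\Ms$-matrix. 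Both arguments ultimately rest on the same engine, but yours is self-contained within the first group of cones (it never needs \eqref{eq:bi2}), at the price of importing one more classical theorem from \cite{BERMAN}; it also exposes a cleaner structural picture, identifying the coefficients of \eqref{eq:coner1b} as the Perron null vector of $M_{1,r_1}$. One cosmetic point: you overload $\rho$ as both the index appearing in the lemma statement and the spectral radius $\rho(N)$; rename one of them to avoid confusion.
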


\begin{proof}
First, we know $M_{\rho,r_1}$ is a $\Zs$-matrix because $h_j \cdot b_i \le 0$, $j \neq i$, so 
the off-diagonal entries are non-positive. Second, we show $M_{\rho,r_1}$ is monotone.
let $c = (c_\rho,\ldots,c_{r_1})$ be such that $M_{\rho,r_1} c \preceq 0$.
Define $y := Y_{\rho,r_1} c$. Then $h_j \cdot y \le 0$, $j = \rho, \ldots, r_1$. 
Also by Lemma~\ref{lem1} and \eqref{eq:bi2}, $h_j \cdot y = 0$, $j = 1,\ldots,\rho-1,r_1+1, \ldots, n$. 
Thus, $y \in \cB \cap \cone(\cS)$. By (A2), $y = 0$. However, $\{ b_\rho,\ldots,b_{r_1} \}$ 
are linearly independent, so $c = 0$. Thus, $M_{\rho,r_1}$ is monotone. Finally,
by Theorem~6.2.3 case ($N_{39}$) of \cite{BERMAN}, $M_{\rho,r_1}$ is a nonsingular $\Ms$-matrix.
\end{proof}

\begin{rem}
A similar result to Lemma~\ref{lem2} first appeared in \cite{MEB10}; a step of the proof was clarified in
\cite{AB12}. Here we present a simpler argument based on monotonicity. 
\end{rem}

\begin{lem}
\label{lem12}
Suppose Assumption~\ref{assum2} and \eqref{eq:coner1a}-\eqref{eq:coner1b} hold. 
Then $\rho = r_1+1$. 
\end{lem}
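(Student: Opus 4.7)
The plan is to argue by contradiction: assume $\rho \le r_1$ and exhibit a nonzero vector in $\cB \cap \cone(\cS)$, violating (A2). The candidate is
\[
\alpha := \sum_{i=\rho}^{q} c_i b_i, \qquad q := \min\{r_1,\,\rho+r_2-2\},
\]
where the strictly negative coefficients $c_i$ are inherited from the expansion \eqref{eq:coner2b} of $\ol{b}_{\mh+2}$. Membership $\alpha \in \cB$ is immediate, so the work is in showing $\alpha \in \cone(\cS)$ and $\alpha \ne 0$.

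To verify $h_j \cdot \alpha \le 0$ for every $j \in I$, I would split $I$ into three ranges. For $j \in \{1,\ldots,\rho-1\}$ every index $i$ in the defining sum lies in $\{\rho,\ldots,\rho+r_2-2\}$, so \eqref{eq:bi2} collapses each $h_j \cdot b_i$ to zero and hence $h_j \cdot \alpha = 0$. Symmetrically, for $j \in \{r_1+1,\ldots,n\}$ every $i$ in the sum lies in $\{1,\ldots,r_1\}$, so Lemma~\ref{lem1} makes each term vanish. Both extreme ranges are routine.

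The central range $j \in \{\rho,\ldots,r_1\}$ is the main obstacle, and I would dispatch it by a subcase on $q$. When $q = \rho+r_2-2$ the truncation is vacuous, $\alpha$ equals $\ol{b}_{\mh+2}$, and the bound follows from $\ol{b}_{\mh+2} \in \cC(v_{\mh+2})$ together with $j \ne \mh+2$. When $q = r_1 < \rho+r_2-2$ I would write $\alpha = \ol{b}_{\mh+2} - \beta$ with $\beta := \sum_{i=r_1+1}^{\rho+r_2-2} c_i b_i$, and argue term by term: $h_j \cdot \ol{b}_{\mh+2} \le 0$ as above, while $h_j \cdot \beta \ge 0$ because every $c_i$ is strictly negative and $h_j \cdot b_i \le 0$ (the latter from $b_i \in \cC(v_i)$ together with $j \le r_1 < i$). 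Subtracting preserves the sign and yields $h_j \cdot \alpha \le 0$.

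Combining the three ranges places $\alpha \in \cB \cap \cone(\cS)$, so (A2) forces $\alpha = 0$. To close out the contradiction I would note that $\{b_\rho,\ldots,b_q\}$ is a subset of $\{b_1,\ldots,b_{r_1}\}$ of size $q-\rho+1 \le r_1-\rho+1 \le r_1-1$ (using $\rho \ge 2$), hence linearly independent by the remark after Lemma~\ref{lem:cneg}; together with each $c_i \ne 0$ this forces $\alpha \ne 0$, a contradiction. Combined with the standing bound $\rho \le r_1+1$, this gives $\rho = r_1+1$.
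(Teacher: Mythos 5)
Your proof is correct, and it takes a genuinely different route from the paper's. The paper reaches the same contradiction --- a nonzero element of $\cB \cap \cone(\cS)$ supported on the overlap $\{\rho,\ldots,\sigma\}$ with $\sigma = \min\{r_1,\rho+r_2-2\}$ --- but it manufactures the coefficients abstractly: Lemma~\ref{lem2} shows $M_{\rho,r_1} = H_{\rho,r_1}^T Y_{\rho,r_1}$ is a nonsingular $\Ms$-matrix, the principal submatrix $M_{\rho,\sigma}$ inherits this, and the inverse-positivity characterization (case $I_{28}$ of Theorem~6.2.3 in \cite{BERMAN}) supplies $c' \preceq 0$ with $M_{\rho,\sigma}c' \prec 0$, yielding $\beta = Y_{\rho,\sigma}c'$ with $h_j \cdot \beta$ strictly negative on the overlap and zero elsewhere by Lemma~\ref{lem1} and \eqref{eq:bi2}. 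You instead take the coefficients concretely from the expansion \eqref{eq:coner2b} of $\ol{b}_{\mh+2}$ and truncate to the overlap. Your three-range sign bookkeeping is sound: the two extreme ranges collapse via \eqref{eq:bi2} and Lemma~\ref{lem1} exactly as you say, and the central-range subcase analysis (using $h_j \cdot \ol{b}_{\mh+2} \le 0$ from cone membership with $j \neq \mh+2$, and $h_j \cdot \beta \ge 0$ from $c_i < 0$ and $h_j \cdot b_i \le 0$ for $j \le r_1 < i$) is where the real work lies and it checks out. The nonvanishing of $\alpha$ also holds as you argue, since $\rho \ge 2$ and $q \le r_1$ make $\{b_\rho,\ldots,b_q\}$ a set of at most $r_1-1$ vectors from $\{b_1,\ldots,b_{r_1}\}$, hence linearly independent by the remark following Lemma~\ref{lem:cneg}, and the sum is nonempty because $r_2 \ge 2$. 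The trade-off: your argument is more elementary and self-contained, bypassing Lemma~\ref{lem2} and the $\Ms$-matrix machinery entirely, but it only delivers $h_j \cdot \alpha \le 0$ where the paper gets strict negativity on the overlap; since membership in $\cone(\cS)$ and the appeal to (A2) require only the weak inequalities, nothing is lost for this lemma, though the $\Ms$-matrix structure established in Lemma~\ref{lem2} is of independent use elsewhere in the paper's development.
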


\begin{proof}
Suppose by way of contradiction that $\rho < r_1+1$. Applying Lemma~\ref{lem1} we obtain 
\begin{subequations}
\begin{eqnarray}
h_j \cdot b_i & = & 0, ~~i = 1, \ldots, r_1 \,, \quad 
                         j = r_1+1, \ldots, n  
\label{eq:lem12a} \\
h_j \cdot b_i & = & 0, ~~i = \rho, \ldots, \rho + r_2 - 2,\mh+2 \,, 
\nonumber \\
              &   &    \quad j = 1, \ldots,\rho-1,\rho + r_2 - 1,\ldots, \mh+1, \mh+3, \ldots, n
\label{eq:lem12b} \,.
\end{eqnarray}
\end{subequations}
Let $\sigma = \min \{ r_1, \rho+r_2-2 \}$. 
Consider $M_{\rho,\sigma} = H_{\rho,\sigma}^T Y_{\rho,\sigma}$, 
where $Y_{\rho,\sigma} = [ b_{\rho} \cdots b_{\sigma} ]$. 
By Lemma~\ref{lem2}, $M_{\rho,r_1}$ is a nonsingular $\Ms$-matrix. 
By Theorem~6.2.3 and the remarks thereafter \cite{BERMAN}, $M_{\rho,\sigma}$ is also a nonsingular 
$\Ms$-matrix. By Theorem~6.2.3 (case $I_{28}$) of \cite{BERMAN} 
there exists $c' = (c_{\rho}',\ldots,c_{\sigma}')$ such that $c' \preceq 0$
and $M_{\rho,\sigma} c' \prec 0$. Define $\beta := Y_{\rho,\sigma} c' \neq 0$. The statement
$H_{\rho,\sigma}^T \beta = M_{\rho,\sigma} c' \prec 0$ is equivalent to
\begin{equation}
\label{eq:beta1}
h_j \cdot \beta < 0, \qquad j = \rho,\ldots,\sigma.
\end{equation} 
By \eqref{eq:lem12a}-\eqref{eq:lem12b},
\begin{equation}
\label{eq:beta2}
h_j \cdot \beta = h_j \cdot \left( c_{\rho}' b_{\rho} + \cdots + c_{\sigma}' b_{\sigma} \right) = 0 \,,
\qquad j = 1,\ldots,\rho-1,r_1+1,\ldots,n \,.
\end{equation}
If $\sigma < r_1$ then from \eqref{eq:lem12b}
\begin{equation}
\label{eq:beta3}
h_j \cdot \beta = 0, \qquad j = \sigma,\ldots,r_1 \,.
\end{equation} 
In sum, \eqref{eq:beta1}-\eqref{eq:beta3} imply $\beta \in \cB \cap \cone(\cS)$. 
By Assumption (A2), $\beta = 0$, a contradiction.
\end{proof}

Lemma~\ref{lem12} tells us that $\rho + r_2 - 1 = r_1 + r_2$ so variable $\rho$ will now be dropped. 
Now we renumber vertices of $\cG$ in order to effectively swap the indices $r_1 + r_2 $ and $\mh+2$, 
but we must take care that this index swap does not disturb
the foregoing construction for $\{ b_1,\ldots,b_{r_1} \}$. In particular, the two index sets $\{ 1,\ldots,r_1 \}$
and $\{ r_1+1,\ldots,n \}$ in \eqref{eq:bi} should not become entangled with each other.  
This is the case because $r_1 + r_2 > r_1 + 1$, so both $r_1 + r_2$ and $\mh+2$
belong to the index set $I \setminus \{ 1,\ldots,r_1 \}$. 

Two indices $r_1$ and $r_2$ have been put in place.
By iterating on Lemmas~\ref{lem1}, \ref{lem12}, and our index swap, we 
can further decompose $\cB$ relative to the cones $\cB \cap \cC(v_i)$ associated
with $\cG$. The procedure generates a specially ordered list of the form
\begin{equation}
\label{eq:rci3}
\{ b_1,\ldots,b_{r_1-1},\ol{b}_{r_1},b_{r_1+1},\ldots,b_{r_1+r_2-1},\ol{b}_{r_1+r_2},\ldots,
   b_{r_1+\cdots+r_{p-1}+1}, \ldots, b_{r-1}, \ol{b}_{r},b_{r+1},\ldots,b_{\kappa+1} \} \,, 
\end{equation}
where $b_i \in \cB \cap \cC(v_i)$ and $r := r_1 + \cdots + r_p$. 
The vectors that do not have an overbar are provided by (A3) (modulo the change of indices). 
The vectors that have overbars are provided by Lemma~\ref{lem4}, 
and each $\ol{b}_{r_1+\cdots+r_k}$ depends on all of the previous $r_k-1$
vectors in the list. 

\begin{thm}
\label{thm:indices}
Suppose Assumption~\ref{assum2} holds.  There exist integers 
$r_1,\ldots,r_p \ge 2$ such that w.l.o.g. (by reordering indices) 
\begin{subequations}
\begin{alignat}{2}
\cB \cap \cC(v_i) & ~\subset~ \spn \{ b_{m_1}, \ldots, b_{m_1+r_1-1} \} \,, 
                   & i &= m_1, \ldots, m_1+r_1-1 \,, \label{eq:r1} \\
                   & ~\vdots & ~~ &~~\vdots \nonumber \\
\cB \cap \cC(v_i) & ~\subset~ \spn \{ b_{m_p}, \ldots, b_{m_p+r_p-1} \} \,, \qquad 
                   & i &= m_p, \ldots, m_p+r_p-1 \,, \label{eq:rp} 
\end{alignat}
\end{subequations}
where $b_i \in \cB \cap \cC(v_i)$, $m_1 := 1$, and 
\begin{equation}
\label{eq:mk}
m_k := r_1 + \cdots + r_{k-1} + 1 \,, \qquad k = 2,\ldots,p \,.
\end{equation}
Moreover, for each $k = 1,\ldots,p$, $\{ b_{m_k}, \ldots, b_{m_k+r_k-2} \}$ are linearly independent
and
\begin{equation}
\label{eq:BG1}
b_{m_k+r_k-1} = c_{m_k} b_{m_k} + \cdots + c_{m_k+r_k-2} b_{m_k+r_k-2} \,, \quad c_i < 0 \,, 
i = m_k, \ldots, m_k+r_k-2 \,.
\end{equation}
\end{thm}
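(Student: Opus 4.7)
The plan is to prove Theorem~\ref{thm:indices} by induction on the block index $k$, showing that one can iteratively construct $p$ disjoint blocks of vectors $\{b_{m_k},\ldots,b_{m_k+r_k-1}\}$ with the claimed linear-algebraic structure. The machinery is already in place: Lemmas~\ref{lem4}, \ref{lem:cneg}, \ref{lem1}, \ref{lem2}, and \ref{lem12} carry out exactly one iteration of this construction and it remains to show that the iteration terminates with a decomposition of the full set $V\cap\cG$.

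For the base case $k=1$, the construction in the paragraph preceding Lemma~\ref{lem4} produces $r_1\ge 2$ and a list $\{b_1,\ldots,b_{r_1-1},\ol{b}_{r_1}\}$ where, after renaming $b_{r_1}:=\ol{b}_{r_1}$, the first $r_1-1$ vectors are linearly independent and \eqref{eq:coner1b} holds with strictly negative coefficients by Lemma~\ref{lem:cneg}. Lemma~\ref{lem1} then produces the orthogonality relations \eqref{eq:bi}. Setting $m_1=1$, this verifies \eqref{eq:r1} and \eqref{eq:BG1} for $k=1$.

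For the inductive step, suppose blocks $1,\ldots,k$ have been constructed so that \eqref{eq:r1}--\eqref{eq:BG1} hold for these blocks with indices $m_1,\ldots,m_k$, and suppose $m_k+r_k-1<\kappa+1$ (otherwise we are done with $p=k$). Set $m_{k+1}:=m_k+r_k$ and consider the cone $\cB\cap\cC(v_{m_{k+1}})$, which by (A4) is nonzero. By the maximality built into (A3), this cone lies in $\spn\{b_1,\ldots,b_{\mh}\}$; let $S$ be the minimal subspace generated by a subset of $\{b_1,\ldots,b_{\mh}\}$ containing it. Applying the analogue of Lemma~\ref{lem4} (which only uses Assumption~\ref{assum2}) produces a vector $\ol{b}_{m_{k+1}+r_{k+1}-1}\in\cB\cap\cC(v_{m_{k+1}})$ that depends with all-nonzero coefficients on some basis $\{b_{j_1},\ldots,b_{j_{r_{k+1}-1}}\}\subset\{b_1,\ldots,b_{\mh}\}$, and Lemma~\ref{lem:cneg} forces these coefficients to be strictly negative.

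The main obstacle is showing that $\{j_1,\ldots,j_{r_{k+1}-1}\}$ is disjoint from the previously-used index sets, so that after the index swap a clean block structure emerges. This is precisely the content of Lemma~\ref{lem12} for the case of two blocks, and the general case follows by the same argument: if the new basis indices intersect any previous block's indices, form the corresponding restricted matrix $M_{\alpha,\beta}$. A direct generalization of Lemma~\ref{lem2}, using the orthogonality relations \eqref{eq:bi} inherited from all previously-constructed blocks, shows $M_{\alpha,\beta}$ is a nonsingular $\Ms$-matrix; then Theorem~6.2.3 (case $I_{28}$) of \cite{BERMAN} provides a vector $c'\preceq 0$ with $M_{\alpha,\beta}c'\prec 0$. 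The vector $\beta:=Y_{\alpha,\beta}c'$ then satisfies $h_j\cdot\beta\le 0$ for all $j\in I$ by combining the strict inequalities from the $\Ms$-matrix property, the orthogonality from previous blocks' applications of Lemma~\ref{lem1}, and the cone membership of the $b_i$'s; this contradicts (A2). Hence the new block's basis indices are disjoint from all previous ones, and after the index swap (which is safe because all affected indices lie outside $\{1,\ldots,m_{k+1}-1\}$) one applies Lemma~\ref{lem1} once more to obtain the fresh orthogonality relations needed for the next iteration. Since $|V\cap\cG|=\kappa+1$ is finite and each block consumes at least two indices, the process terminates after $p\le\lfloor(\kappa+1)/2\rfloor$ steps, yielding \eqref{eq:r1}--\eqref{eq:BG1}.
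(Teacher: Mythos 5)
Your proposal reconstructs the block-building iteration (Lemmas~\ref{lem4}, \ref{lem:cneg}, \ref{lem1}, \ref{lem2}, \ref{lem12} plus the index swaps), and that part is sound and matches the paper's narrative leading up to the theorem. But there is a genuine gap: you never actually prove the containments \eqref{eq:r1}--\eqref{eq:rp}. Those assert that the \emph{entire} cone $\cB \cap \cC(v_i)$ is contained in $\spn\{ b_{m_k},\ldots,b_{m_k+r_k-1}\}$ for \emph{every} $i$ in the $k$th block. The construction you invoke only yields this for the single ``dependent'' vertex $v_{m_k+r_k-1}$ (via the minimal-subspace choice in \eqref{eq:coner1a}); for the other vertices $i = m_k,\ldots,m_k+r_k-2$ it only produces one selected vector $b_i$ in each cone. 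Writing ``Lemma~\ref{lem1} then produces the orthogonality relations \eqref{eq:bi}. Setting $m_1=1$, this verifies \eqref{eq:r1}'' conflates a statement about the chosen $b_i$'s with a statement about the full cones. Note also that maximality in (A3) does not close this gap: replacing $b_i$ by some $\beta_i \in \cB\cap\cC(v_i)$ outside the block span still gives at most $\mh$ independent vectors, so no contradiction with (A3) arises directly, and in any case the block span is generally a proper subspace of $\spn\{b_1,\ldots,b_{\mh}\}$.

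This missing step is essentially the whole content of the paper's proof of the theorem. There, one takes an arbitrary $\beta_i \in \cB\cap\cC(v_i)$ with $i$ in the first block, writes $\beta_i = \alpha_1 b_1 + \cdots + \alpha_{r_1} b_{r_1} + \beta$ with $\beta \in \cB$ independent of $\{b_1,\ldots,b_{r_1}\}$, uses the invariance inequality $h_j\cdot\beta_i \le 0$ together with the orthogonality relations of Lemma~\ref{lem1} to conclude $h_j\cdot\beta \le 0$ for $j = r_1+1,\ldots,n$, and then invokes the argument of Proposition~7.2 of \cite{MEB10} (which rests on (A2) and the maximality in (A3)) to force $\beta = 0$. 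You need this argument, or an equivalent one, for each block; without it the theorem's central claim is unproved. A secondary, smaller point: the blocks do not exhaust $V\cap\cG$ (the vectors $b_{r+1},\ldots,b_{\kappa+1}$ are left over, as the paper's remark after the theorem notes), so your closing claim of ``a decomposition of the full set $V\cap\cG$'' overstates what is (and needs to be) proved.
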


\begin{proof}
The vectors $b_i \in \cB \cap \cC(v_i)$ in \eqref{eq:r1}-\eqref{eq:rp} are provided by \eqref{eq:rci3},
including those from Lemma~\ref{lem4} (the overbar has now been removed). Lemma~\ref{lem:cneg} 
gives \eqref{eq:BG1}. It remains only to prove \eqref{eq:r1}-\eqref{eq:rp}. We consider only
\eqref{eq:r1}. Consider any $i \in \{ 1,\ldots,r_1 \}$ and any $\beta_i \in \cB \cap \cC(v_i)$ such that
$\beta_i = \alpha_1 b_1 + \cdots + \alpha_{r_1} b_{r_1} + \beta$, where $\alpha_i \in \RR$ 
and $\beta \in \cB$. W.l.o.g. we may assume $\beta$ is independent of 
$\{ b_1,\ldots,b_{r_1} \}$. From \eqref{eq:inv} and Lemma~\ref{lem1}, 
$h_j \cdot \beta_i 
= h_j \cdot (c_1 b_1 + \cdots + c_{r_1} b_{r_1} + \beta) 
= h_j \cdot \beta \le 0$, for $j = r_1+1,\ldots,n$. 
By the proof of Proposition~7.2 in \cite{MEB10}, $\beta = 0$. Hence, for any 
$i \in \{ 1,\ldots,r_1 \}$ and $\beta_i \in \cB \cap \cC(v_i)$, 
$\beta_i \in \spn \{ b_1, \ldots, b_{r_1} \}$, as desired. 
\end{proof}

The integers $\{ r_1,\ldots,r_p \}$ are called the {\em reach control indices} of 
system~\eqref{eq:thesystem} with respect to simplex $\cS$.

\begin{rem}
A number of relationships between the integers $\kappa$, $\mh$, $p$, and $r$ are implied by the construction.
By definition $p = \kappa+1 - \mh$. By (A3), $\mh \le \kappa$. Then we observe
that each of the ``excess'' $p$ vertices of $\cG$, namely $v_{m_1+r_1-1},...,v_{m_p+r_p-1}$, 
has an associated non-zero vector by (A4). By Lemma~\ref{lem12}, each of these $p$ vertices 
uses up at least one exclusive vector in the basis 
\[
\{ b_1,\ldots,b_{r_1-1},b_{r_1+1},\ldots,b_{r_1+r_2-1},\ldots, b_{r_1+\cdots+r_{p-1}+1}, \ldots, b_{r-1} \} \,.
\] 
So we need at least $p$ independent vectors in this basis. That is, 
\[
\mh \ge p =  \kappa+1 - \mh \,. 
\]
Thus, in order for (A4) to hold it is necessary that
\[
\mh \ge \frac{\kappa+1}{2} \,.
\]
This condition is interpreted to say that RCP is only solvable if there are sufficient inputs. 

The construction does not make any explicit statements about the ``extra'' linearly independent vectors
$\{ b_{r+1},\ldots,b_{\kappa+1} \}$. These vectors correspond to cones that were ``swapped out'' 
due to the index renumbering. Moreover, the cones $\cB \cap \cC(v_i)$, $i = r+1,\ldots,\kappa+1$
do not enjoy the properties discovered for $\cB \cap \cC(v_i)$, $i = 1,\ldots,r$. 
\end{rem}

\section{Piecewise Affine Feedback}
\label{sec:pwa}

The reach control indices catalog the degeneracies (caused by insufficient inputs) 
that lead to the appearance of equilibria in $\cS$ whenever $p \ge 1$ and continuous state feedback is applied. 
Thus, any control method that overcomes the limits of continuous state feedback 
must confront this degeneracy and will necessarily draw
upon the degrees of freedom in $\cB$ provided to $\cG$ which are inscribed by the indices. 
In this section we investigate the extent to which piecewise affine feedback can solve RCP, in cases when 
continuous state feedback cannot. We construct a triangulation \cite{LEE} of the simplex $\cS$ 
such that RCP is solvable for each simplex of the triangulation. 
The next result shows that because of condition (iii) of Problem~\ref{prob0} a situation like the one in 
Figure~\ref{fig3}(c) cannot happen. Correspondingly one recovers a third necessary condition for solvability of
RCP by open-loop controls - in essence saying that $\cB$ cannot be parallel to $\cF_0$.

\begin{lem}
\label{lem:nec3}
Suppose Assumption~\ref{assum2} and \eqref{eq:r1}-\eqref{eq:BG1} hold.
If $\cS \overset{\cS}{\longrightarrow} \cF_0$ by open-loop controls, then 
$\spn \{ b_{m_k}, \ldots, b_{m_k+r_k-2} \} \not\subset \cH_0 := \{ y \in \RR^n ~|~ h_0 \cdot y = 0 \}$ 
for each $k = 1,\ldots,p$. 
\end{lem}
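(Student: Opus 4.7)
My plan is to argue the contrapositive: suppose $V_k := \spn\{b_{m_k},\ldots,b_{m_k+r_k-2}\} \subset \cH_0$ for some $k$, and show that RCP cannot be solved by open-loop controls. The first step is to leverage the structure of the reach control indices to extend the hypothesis from basis vectors to cones: by~\eqref{eq:BG1} the dependent vector $b_{m_k+r_k-1}$ also lies in $V_k$, so $\spn\{b_{m_k},\ldots,b_{m_k+r_k-1}\} = V_k \subset \cH_0$, and then~\eqref{eq:r1} gives $\cB \cap \cC(v_i) \subset \cH_0$ for every vertex $v_i$ of the $k$th group. In words, every invariance-consistent $\cB$-velocity at these vertices is tangent to $\cF_0$.

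Next, I instantiate Definition~\ref{def:ol} at $x_0 := v_{m_k}$. Since $v_{m_k} \in \cG$, $A v_{m_k} + a \in \cB$, so the initial velocity $\dot{\phi}(0) = A v_{m_k} + B\mu_{v_{m_k}}(0) + a$ lies in $\cB$. Conditions~(ii) and~(iii) at $t = 0$ yield $\dot{\phi}(0) \in \cB \cap \cC(v_{m_k}) \subset \cH_0$ with $\|\dot{\phi}(0)\| > \varepsilon$. Hence the trajectory starts at $v_{m_k} \in \cF_0$ with non-zero velocity tangent to $\cF_0$, and $h_0 \cdot \phi(0) = c$, where $c$ is the common value of $h_0$ on $\cF_0$.

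To close the argument, I use~(i): the trajectory must satisfy $h_0 \cdot \phi(t) \le c$ on $[0, T]$ and $h_0 \cdot \phi(t) > c$ on $(T, T+\gamma)$. I would propagate the tangency $\dot{\phi}(0) \in V_k$ together with the pointwise invariance conditions from~(iii) forward in time, showing that the trajectory is trapped in the degenerate face $Z := \conv\{v_{m_k},\ldots,v_{m_k+r_k-1}\}$; indeed, under the hypothesis the subspace $V_k$ coincides with the tangent space of $\aff(Z)$, and since $Z$ is a face of $\cS$ we have $\aff(Z) \cap \cS = Z \subset \cF_0$. Exit through $\cF_0$ in the required transverse sense then becomes impossible, contradicting~(i).

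The main obstacle is this last step. Condition~(iii) pins down the velocity only at times $t \in [0, T]$, so a naive first-order argument at $t = 0$ (or at $t = T$) is insufficient, and one must integrate the constraint over $[0, T]$. I expect the clean realization to proceed analogously to Lemma~\ref{lem12}: one assembles the constraints $h_j \cdot \dot\phi \le 0$ from~(iii) together with $h_0 \cdot \dot\phi = 0$ coming from $\dot\phi(0) \in V_k \subset \cH_0$, and invokes the $\Ms$-matrix monotonicity of Lemma~\ref{lem2} to extract a non-zero vector in $\cB \cap \cone(\cS)$, contradicting Assumption~(A2).
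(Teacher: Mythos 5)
Your first two thirds track the paper's proof closely: you correctly reduce to showing that every admissible velocity at the degenerate face $Z := \conv\{v_{m_k},\ldots,v_{m_k+r_k-1}\}$ is tangent to it, and you correctly identify that the contradiction should come from condition (i) of Definition~\ref{def:ol} (a trajectory trapped in $Z \subset \cF_0$ can never leave $\cS$ in the required transverse sense). That is exactly the paper's argument: for any $t$ with $\phi_{\mu_x}(t,x) \in Z$, the velocity lies in $\cB$ (since $Z \subset \cO$), condition (iii) gives $h_l \cdot \dot\phi \le 0$ for the active facets, the decomposition argument from the proof of Theorem~\ref{thm:indices} forces the velocity into $\spn\{b_{m_k},\ldots,b_{m_k+r_k-1}\}$, Lemma~\ref{lem1} gives $h_j \cdot \dot\phi = 0$ for $j \in I \setminus \{m_k,\ldots,m_k+r_k-1\}$, and the hypothesis $V_k \subset \cH_0$ adds $h_0 \cdot \dot\phi = 0$; hence $\dot\phi \in T_{Z}(\phi)$ and $Z$ is positively invariant, contradicting (i). Note one small repair you need: \eqref{eq:r1} gives the containment $\cB \cap \cC(v_i) \subset V_k$ only at the \emph{vertices}, whereas the invariance argument needs the velocity constraint at arbitrary points of $Z$; this is handled by writing $\dot\phi = \sum_i \alpha_i b_{m_k+i} + \beta$ and showing $\beta = 0$ as in Theorem~\ref{thm:indices}.

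The genuine gap is your proposed mechanism for closing the argument. You cannot reach a contradiction with Assumption (A2) by an $\Ms$-matrix extraction as in Lemma~\ref{lem12}. First, (A2) is a standing hypothesis of the lemma and is perfectly consistent with $V_k \subset \cH_0$ (e.g.\ $\cB$ parallel to $\cF_0$ in the 2D picture of Figure~\ref{fig0} does not force $\cB \cap \cone(\cS) \neq \Zero$), so (A2) cannot be the thing that fails; what fails is solvability by open-loop controls, and the conclusion must therefore be extracted from condition (i). Second, the constraints you can assemble at a point $z \in \ri(Z)$ are $h_j \cdot \dot\phi = 0$ for $j \in \bigl(\{0\} \cup I\bigr) \setminus \{m_k,\ldots,m_k+r_k-1\}$; nothing in (iii) bounds $h_j \cdot \dot\phi$ for $j \in \{m_k,\ldots,m_k+r_k-1\}$ (those facets do not contain $z$), so no vector of $\cone(\cS)$ is produced and the monotonicity of $M_{\alpha,\beta}$ has nothing to act on. The correct finish is the one you dismissed as the obstacle: the pointwise tangency $\dot\phi(t) \in T_{Z}(\phi(t))$ holding at \emph{every} time the trajectory visits $Z$, combined with uniqueness of solutions, yields positive invariance of $Z$ (a Nagumo-type viability argument), and a trajectory started at $v_{m_k}$ then never exits $\cS$, contradicting (i).
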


\begin{proof}
W.l.o.g. we consider only $k = 1$. Define
$\widehat{\cF}_0 := \conv \{ v_1,\ldots,v_{r_1} \} \subset \cF_0$. 
Let $\{ \mu_x \}$ be open-loop controls satisfying (i)-(iii) of Definition~\ref{def:ol}. 
Let $x \in \cS$ and consider any $t \in [0,T(x)]$ such that $\phi_{\mu_x}(t,x) \in \widehat{\cF}_0$. 
First, by condition (iii) of Definition~\ref{def:ol}, 
$h_l \cdot ( A \phi_{\mu_x}(t,x) + B \mu_x(t) + a ) \le 0$ for $l \in I$, $\phi_{\mu_x}(t,x) \in \cF_l$. 
Second, let $A \phi_{\mu_x}(t,x) + B \mu_x(t) + a = \alpha_1 b_1 + \cdots + \alpha_{r_1} b_{r_1} + \beta$, 
where $\alpha_i \in \RR$ and $\beta \in \cB$. By the same argument as in Theorem~\ref{thm:indices}, 
$\beta = 0$. Then by Lemma~\ref{lem1}, $h_j \cdot ( A \phi_{\mu_x}(t,x) + B \mu_x(t) + a ) = 0$ for
$j = r_1+1,\ldots, n$. Suppose by way of contradiction that $\spn \{ b_1, \ldots, b_{r_1} \} \subset \cH_0$. 
Then $h_0 \cdot ( A \phi_{\mu_x}(t,x) + B \mu_x(t) + a ) = 0$. 
On the other hand, for $z \in \widehat{\cF}_0$, 
$T_{\widehat{\cF}_0}(z) = \left\{ y \in \RR^n ~|~ 
                          h_j \cdot y = 0, h_l \cdot y \le 0, j = 0, r_1+1,\ldots,n, l \in I ~s.t.~
                          z \in \cF_l \right\}$. 
We conclude that for all $x \in \cS$ and $t \in [0,T(x)]$, if $\phi_{\mu_x}(t,x) \in \widehat{\cF}_0$,
then $A \phi_{\mu_x}(t,x) + B \mu_x(t) + a \in T_{\widehat{\cF}_0}(x)$. Using uniqueness of solutions,
we obtain $\widehat{\cF}_0$ is a positively invariant set, a contradiction.
\end{proof}

\begin{defn} 
\label{def:pwa}
Given system \eqref{eq:thesystem} and a state feedback $u = f(x)$, 
we say $f(x)$ is a {\em piecewise affine feedback} if there exists a 
triangulation $\TT$ of $\cS$ such that for each $n$-dimensional $\cS^j \in \TT$, there exist 
$K^j \in \RR^{m \times n}$ and $g^j \in \RR^m$ such that $f(x) = K^j x + g^j$, $x \in \cS^j$. 
\end{defn}

This definition of piecewise affine feedback allows for discontinuities at the boundaries of simplices;
moreover, the feedback is a multi-valued function, distinct from the usual notion of piecewise 
affine function in algebraic topology \cite{MUNKRES}. Resolving what control value to use at points lying in
more than one simplex is treated as a problem of implementation. The artifact of a 
{\em discrete supervisory controller} \cite{DES} will be introduced to convert 
the multi-valued function to a single-valued feedback. 

We now explain informally an inductive procedure for subdividing $\cS$ in order
that RCP can be solved by piecewise affine feedback. First, in Lemma~\ref{lem10} we show that because of
Lemma~\ref{lem:nec3}, each simplex $\conv \{ v_{m_k},\ldots,v_{m_k+r_k-1} \}$, $k = 1,\ldots,p$, has 
a vertex (among $\{ v_{m_k},\ldots,v_{m_k+r_k-1} \}$) with $b_i \in \cB \cap \cC(v_i)$ pointing
out of $\cS$. By convention, we reorder indices so this vertex is the first one in each list
$\{ v_{m_k},\ldots,v_{m_k+r_k-1} \}$. We make a subdivision of $\cS$ by placing a new vertex
$v'$ along the edge $(v_0,v_{m_k})$. In particular, at the first iteration we would 
have $v' \in (v_0,v_1)$, and we form two simplices $\cS^1$ and $\cS'$ as in Figure~\ref{fig4}. 
Lemma~\ref{lem9} shows that because $b_{m_k} \in \cB \cap \cC(v_{m_k})$ points out of $\cS$ at $v_{m_k}$ and  
because the invariance conditions for $\cS$ are solvable at $v_0$, a convexity argument 
(precisely, \eqref{eq:lem9}) gives that $v'$ can be placed along $(v_0,v_{m_k})$ so that 
$\cB \cap \cone(\cS^1) \neq \Zero$. Then in
Lemma~\ref{lem3} one applies Theorem~6.2 of \cite{MEB10} to obtain that RCP is solved for $\cS^1$. Essentially
$\cS^1$ can be removed from further consideration, and the induction step is repeated with $\cS$ replaced
by the remainder $\cS'$. See Figure~\ref{fig5}. To guarantee that the induction is sound, one must show that 
$\cS'$ inherits the relevant properties of $\cS$, especially the property of Lemma~\ref{lem:nec3}.
This is done in Lemma~\ref{lem5}.

\begin{lem}
\label{lem10}
Suppose Assumption~\ref{assum2} and \eqref{eq:r1}-\eqref{eq:BG1} hold.
Then w.l.o.g. (by reordering indices $\{ m_k,\ldots,m_k+r_k-1 \}$),
$h_0 \cdot b_{m_k} > 0$, $k = 1 , \ldots, p$. 
\end{lem}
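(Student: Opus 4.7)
The plan is, for each $k \in \{1,\ldots,p\}$, to produce some index $i \in \{m_k,\ldots,m_k+r_k-1\}$ with $h_0 \cdot b_i > 0$, and then to invoke the renumbering freedom noted in the remark following Lemma~\ref{lem:cneg} to swap this index into the first slot $m_k$ of its group. Since the $p$ groups are disjoint and the orderings outside each group are unaffected, doing this independently for each $k$ settles the claim.

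The existence of such an $i$ is the only real content, and I would extract it by combining Lemma~\ref{lem:nec3} with the dependency relation \eqref{eq:BG1}. Fix $k$ and suppose, for contradiction, that $h_0 \cdot b_i \le 0$ for every $i \in \{m_k,\ldots,m_k+r_k-1\}$. Dotting $h_0$ into \eqref{eq:BG1} gives
\[
h_0 \cdot b_{m_k+r_k-1} \;=\; \sum_{i=m_k}^{m_k+r_k-2} c_i \,(h_0 \cdot b_i),
\]
where every $c_i<0$ and every $h_0 \cdot b_i \le 0$, so each summand is nonnegative and hence $h_0 \cdot b_{m_k+r_k-1} \ge 0$. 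Together with the standing assumption this forces $h_0 \cdot b_{m_k+r_k-1}=0$, and then $c_i(h_0\cdot b_i)=0$ with $c_i<0$ forces $h_0\cdot b_i=0$ for every $i\in\{m_k,\ldots,m_k+r_k-2\}$. Therefore $\spn\{b_{m_k},\ldots,b_{m_k+r_k-2\}}\subset \cH_0$, contradicting Lemma~\ref{lem:nec3}.

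To finish, pick any $i\in\{m_k,\ldots,m_k+r_k-1\}$ with $h_0\cdot b_i>0$ and swap it with $m_k$. By the remark after Lemma~\ref{lem:cneg}, permuting indices within a single group $\{m_k,\ldots,m_k+r_k-1\}$ is harmless: in the renumbered list, the new ``last'' vector is still a strictly negative linear combination of the new ``first'' $r_k-1$ vectors, so \eqref{eq:r1}--\eqref{eq:BG1} continue to hold. I do not expect any real obstacle here; the only mild care needed is to observe that the swap is internal to a single group and so does not disturb Lemmas~\ref{lem1} and \ref{lem12}, which only depend on the partition $\{1,\ldots,\kappa+1\}=\bigsqcup_{k}\{m_k,\ldots,m_k+r_k-1\}$, not on the intra-group order.
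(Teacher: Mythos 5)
Your proof is correct and follows essentially the same route as the paper: a contradiction argument combining Lemma~\ref{lem:nec3} with the strictly negative coefficients in \eqref{eq:BG1}, followed by the intra-group index swap justified by the remark after Lemma~\ref{lem:cneg}. The only cosmetic difference is the order of operations --- the paper invokes Lemma~\ref{lem:nec3} first to obtain some $h_0 \cdot b_i < 0$ and then computes $h_0 \cdot b_{m_k} > 0$ directly from \eqref{eq:BG1}, whereas you deduce from \eqref{eq:BG1} that all the inner products would have to vanish and contradict Lemma~\ref{lem:nec3} at the end; both are the same argument.
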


\begin{proof} 
We prove the result only for $k = 1$. 
If for any $i \in \{ 1,\ldots,r_1 \}$, $h_0 \cdot b_i > 0$, then the proof is finished. Instead
suppose that for all $i \in \{ 1,\ldots,r_1 \}$, $h_0 \cdot b_i \le 0$. 
Using Lemma~\ref{lem:nec3} and by reordering the indices $1,\ldots,r_1$, assume
$h_0 \cdot b_{r_1} < 0$. By \eqref{eq:BG1}, 
$b_{1} = \frac{1}{c_1} \left( b_{r_1} - c_2 b_2 - \cdots - c_{r_1-1} b_{r_1-1} \right)$
with $c_i < 0$. Thus we obtain $h_0 \cdot b_1 = h_0 \cdot \frac{1}{c_1} 
\left( b_{r_1} - c_2 b_2 - \cdots - c_{r_1-1} b_{r_1-1} \right) \ge \frac{1}{c_1} h_0 \cdot b_{r_1} > 0$. 
\end{proof} 

\begin{exmp}
Lemmas~\ref{lem:nec3} and \ref{lem10} are illustrated in Figure~\ref{fig4} for a 2D example. We have
$\cG = \conv\{ v_1, v_2 \}$, $\cB \cap \cone(\cS) = \Zero$, and $\cB \cap \cC(v_i) \neq \Zero$, $i = 1,2$,
as required by Assumption~\ref{assum2}. We observe that $\cB$ is not parallel to $\cF_0$.
Otherwise, the only way for trajectories to exit $\cF_0$ would be by violating the invariance conditions at
$v_1$ or $v_2$ as depicted in Figure~\ref{fig3}(c). Therefore, $\cB$ cannot be parallel to $\cF_0$. 
This is the essence of Lemma~\ref{lem:nec3}. Next, since $\cB$ is not parallel to $\cF_0$ 
there is $b_1 \in \cB \cap \cC(v_1)$ that points out of $\cS$. This is the content of Lemma~\ref{lem10}.   
\end{exmp}

\begin{figure}[!t]
\begin{center}
\psfrag{v0}[][][\scale]{$v_0$} 
\psfrag{v1}[][][\scale]{$v_1$} 
\psfrag{v2}[][][\scale]{$v_2$} 
\psfrag{v1p}[][][\scale]{$v'$} 
\psfrag{b1}[][][\scale]{$b_1$} 
\psfrag{h0}[][][\scale]{$h_0$}
\psfrag{h1}[][][\scale]{$h_1$} 
\psfrag{h2}[][][\scale]{$h_2$} 
\psfrag{hp}[][][\scale]{$h'$} 
\psfrag{S1}[][][\scale]{$\cS'$} 
\psfrag{S2}[][][\scale]{$\cS^1$} 
\psfrag{O}[][][\scale]{$\cO$}
\psfrag{B}[][][\scale]{$\cB$}
\includegraphics[width= 0.5 \linewidth]{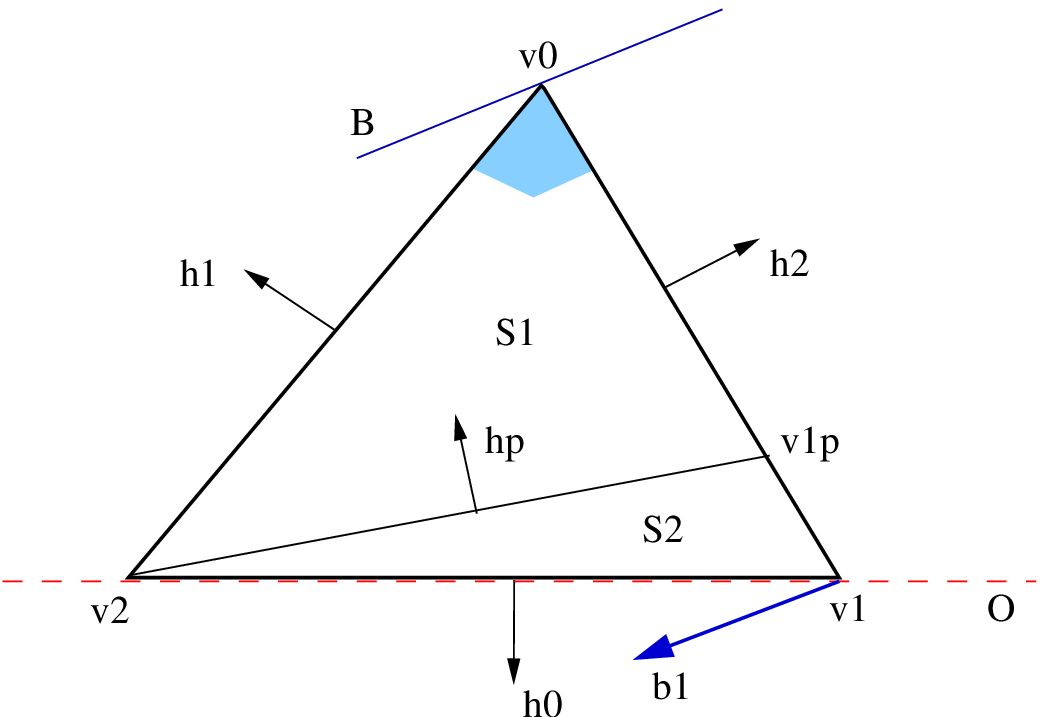}
\caption{Subdivision into two simplices $\cS'$ and $\cS^1$.}
\label{fig4}
\end{center}
\end{figure}

Following Lemma~\ref{lem10}, suppose that $b_1$ satisfies $h_0 \cdot b_1 > 0$. We consider any point 
$v'$ in the open segment $(v_0,v_1)$. That is, let $\lambda \in (0,1)$ and define
\begin{equation}
\label{eq:vprime}
v' = \lambda v_1 + (1-\lambda) v_0 \,.
\end{equation} 
Now define the following simplices in $\cS$:
\begin{eqnarray*}
\cS' & = & \conv\{ v_0, v', v_2,\ldots,v_n \} \\
\cS^1 & = & \conv\{ v', v_1, v_2,\ldots,v_n \} \,.
\end{eqnarray*}
Also define the new exit facet for $\cS'$ by
$\cF_0' := \conv\{ v', v_2,\ldots,v_n \}$. 
See Figure~\ref{fig4}. The following lemma provides a formula for the normal vector $h'$ of $\cF_0'$. 
\begin{lem}
\label{lem8}
Let $h_0 = -\gamma_1 h_1 - \ldots - \gamma_n h_n$ with $\gamma_i > 0$, and let $\lambda \in (0,1)$. 
Then the normal vector to $\cF_0'$ pointing out of $\cS^1$ is 
\begin{equation}
\label{eq:hprime}
 h' =  \gamma_1 h_1 + \lambda \sum_{j=2}^{n} \gamma_j h_j = \gamma_1 (1-\lambda) h_1 - \lambda h_0 \,.
\end{equation}
\end{lem}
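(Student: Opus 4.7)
The plan is to observe that the second displayed equality for $h'$ is just algebra: substituting $h_0 = -\sum_{j=1}^n \gamma_j h_j$ into $\gamma_1(1-\lambda)h_1 - \lambda h_0$ gives $\gamma_1 h_1 + \lambda \sum_{j=2}^n \gamma_j h_j$. So the real work reduces to verifying that $h' := \gamma_1 h_1 + \lambda \sum_{j=2}^n \gamma_j h_j$ (i) is orthogonal to every edge of $\cF_0' = \conv\{v', v_2, \ldots, v_n\}$ and (ii) points out of $\cS^1$, i.e.\ away from the unique vertex $v_1$ of $\cS^1$ not lying in $\cF_0'$.

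For (i), I would introduce the scalar $\alpha_j := h_j \cdot v_i$, the common value of $h_j$ on any $v_i \in \cF_j$ (i.e.\ $i \neq j$). Since $v_0, v_1 \in \cF_j$ for $j \ge 2$, one gets $h_j \cdot v' = \alpha_j$ for $j \ge 2$, while $h_1 \cdot v' = (1-\lambda)\alpha_1 + \lambda\, h_1 \cdot v_1$. Expanding $h' \cdot (v_k - v')$ for $k \in \{2,\ldots,n\}$ with these values and collecting terms, everything cancels except
\[
h' \cdot (v_k - v') \;=\; \lambda \bigl[\gamma_k (h_k \cdot v_k - \alpha_k) \;-\; \gamma_1 (h_1 \cdot v_1 - \alpha_1)\bigr],
\]
so orthogonality amounts to the compatibility identity $\gamma_1(h_1 \cdot v_1 - \alpha_1) = \gamma_k(h_k \cdot v_k - \alpha_k)$ for each $k \in \{2,\ldots,n\}$.

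This identity is the one genuine use of the hypothesis and is the main (mild) obstacle. I would obtain it by taking the inner product of both sides of $h_0 = -\sum_{j=1}^n \gamma_j h_j$ with $v_1 - v_k$. The left side vanishes because $v_1, v_k \in \cF_0$ share the value $\alpha_0$. On the right side every term with $j \neq 1,k$ vanishes because $v_1, v_k \in \cF_j$, leaving exactly $-\gamma_1(h_1 \cdot v_1 - \alpha_1) + \gamma_k(h_k \cdot v_k - \alpha_k) = 0$, as required.

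For (ii), write $v_1 - v' = (1-\lambda)(v_1 - v_0)$. For $j \ge 2$, both $v_1$ and $v_0$ lie in $\cF_j$, so $h_j \cdot (v_1 - v_0) = 0$; hence $h' \cdot (v_1 - v') = \gamma_1(1-\lambda)(h_1 \cdot v_1 - \alpha_1)$. Since $h_1$ is the outward unit normal to $\cF_1$ and $v_1 \notin \cF_1$, we have $h_1 \cdot v_1 - \alpha_1 < 0$, and with $\gamma_1 > 0$, $1-\lambda > 0$ this gives $h' \cdot (v_1 - v') < 0$. Thus $h'$ points away from $v_1$, which is precisely the outward direction for $\cS^1$, completing the proof.
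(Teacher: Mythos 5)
Your proof is correct: the algebraic identification of the two expressions for $h'$ is immediate, the orthogonality of $h'$ to $\aff(\cF_0')$ reduces exactly to the compatibility identity $\gamma_1(h_1\cdot v_1-\alpha_1)=\gamma_k(h_k\cdot v_k-\alpha_k)$, which you correctly extract by pairing $h_0=-\sum_j\gamma_j h_j$ with $v_1-v_k$, and the sign computation $h'\cdot(v_1-v')=\gamma_1(1-\lambda)(h_1\cdot v_1-\alpha_1)<0$ settles the outward orientation with respect to $\cS^1$. The paper itself does not actually include a proof of this lemma (the appendix only proves Theorems~\ref{thm:nec1} and \ref{thm:nec2}), so there is no argument to compare against; your direct verification is a complete and appropriate one. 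The only cosmetic remark is that the $h_j$ are declared to be unit normals while the $h'$ of \eqref{eq:hprime} is not normalized, but since only the sign of inner products with $h'$ is ever used downstream (Lemmas~\ref{lem9} and \ref{lem5}), this is harmless.
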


\begin{lem}
\label{lem9}
Suppose Assumption~\ref{assum2} and \eqref{eq:r1}-\eqref{eq:BG1} hold.
There exists $v' \in (v_0,v_1)$, such that $\cB \cap \cone(\cS^1) \neq \Zero$. 
Moreover, $b_1 \in \cB \cap \cone(\cS^1)$ with $h' \cdot b_1 < 0$. 
\end{lem}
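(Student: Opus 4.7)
The plan is to show that the vector $b_1$ itself belongs to $\cB \cap \cone(\cS^1)$, and that $h' \cdot b_1 < 0$ holds for a suitable choice of $\lambda \in (0,1)$ in \eqref{eq:vprime}. Since $b_1 \in \cB \cap \cC(v_1)$ is already in $\cB$ and is nonzero by (A4), the only things to verify are its membership in $\cone(\cS^1)$ and the strict inequality against $h'$.

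First I identify the outward normals of $\cS^1$ at $v'$. The facets of $\cS^1$ incident to $v'$ are $\cF_0' = \conv\{v', v_2, \ldots, v_n\}$ with outward normal $h'$, and for each $j = 2,\ldots,n$ the facet of $\cS^1$ opposite $v_j$, which lies in $\cF_j$ and therefore inherits the outward normal $h_j$. Consequently,
\[
\cone(\cS^1) = \{y \in \RR^n : h' \cdot y \le 0 \text{ and } h_j \cdot y \le 0, \ j = 2,\ldots,n\}.
\]
Because $b_1 \in \cC(v_1)$ and $v_1 \in \cF_j$ for every $j \neq 1$, the inequalities $h_j \cdot b_1 \le 0$ for $j = 2,\ldots,n$ are automatic.

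It remains to select $\lambda$ so that $h' \cdot b_1 < 0$. Using the formula of Lemma~\ref{lem8},
\[
h' \cdot b_1 = \gamma_1 (1-\lambda) \, h_1 \cdot b_1 \; - \; \lambda \, h_0 \cdot b_1.
\]
By Lemma~\ref{lem10}, after a renumbering within $\{1, \ldots, r_1\}$ (permitted by the remark following Lemma~\ref{lem:cneg}), we may assume $h_0 \cdot b_1 > 0$. If $h_1 \cdot b_1 \le 0$, then $h' \cdot b_1 < 0$ for every $\lambda \in (0,1)$. Otherwise $h_1 \cdot b_1 > 0$, and the condition $h' \cdot b_1 < 0$ becomes $\lambda > \gamma_1 (h_1 \cdot b_1) / \bigl(\gamma_1 (h_1 \cdot b_1) + h_0 \cdot b_1\bigr)$, whose right-hand side is strictly less than $1$ precisely because $h_0 \cdot b_1 > 0$; any such $\lambda \in (0,1)$ works. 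In either case $b_1 \in \cone(\cS^1)$ with $h' \cdot b_1 < 0$, and since $b_1 \in \cB \setminus \{0\}$, it follows that $\cB \cap \cone(\cS^1) \neq \Zero$.

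The only real subtlety is bookkeeping: correctly describing the facet structure of $\cS^1$ at $v'$ (noting that the new facet $\cF_0'$ is the unique facet whose normal genuinely changes, while the normals $h_j$ for $j = 2,\ldots,n$ are inherited from $\cS$), and using the positivity of $h_0 \cdot b_1$ supplied by Lemma~\ref{lem10} to guarantee that the required lower bound on $\lambda$ is a strictly interior point of $(0,1)$. No further properties of the reach control indices beyond Lemma~\ref{lem10} are invoked.
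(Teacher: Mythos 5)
Your proof is correct and follows essentially the same route as the paper's: identify the facet normals of $\cone(\cS^1)$, observe that the inequalities $h_j \cdot b_1 \le 0$ for $j = 2,\ldots,n$ are inherited from $b_1 \in \cC(v_1)$, and then use the formula of Lemma~\ref{lem8} together with $h_0 \cdot b_1 > 0$ from Lemma~\ref{lem10} to choose $\lambda$ close enough to $1$. The only cosmetic difference is that your first case, $h_1 \cdot b_1 \le 0$, is actually vacuous --- combined with the other inequalities it would put the nonzero vector $b_1$ in $\cB \cap \cone(\cS)$, contradicting (A2) --- which is precisely how the paper concludes $h_1 \cdot b_1 > 0$ and reduces the argument to the single case you treat second.
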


\begin{proof}
Observe that 
$
\cone(\cS^1) = \{ y \in \RR^n ~|~ h' \cdot y \leq 0 \,, h_j \cdot y \leq 0, \quad j \in \{ 2, \cdots, n\}\}. 
$
We show there is an interval of values for $\lambda$ such that $0 \neq b_1 \in \cB \cap \cone(\cS^1)$,
where we assume the index ordering of Lemma~\ref{lem10} so that $h_0 \cdot b_1 > 0$. 
First, since $b_1 \in \cB \cap \cC(v_1)$ we know $h_j \cdot b_1 \leq 0$ for $j \in \{ 2, \ldots, n\}$. 
We must only show that there exists $\lambda \in (0,1)$ such that $h' \cdot b_1 < 0$.
From Lemma~\ref{lem8} we have
\begin{equation}
\label{eq:lem9}
h' \cdot b_1 = \gamma_1 (1-\lambda) h_1 \cdot b_1 - \lambda h_0 \cdot b_1 \,.
\end{equation}
Since $h_1 \cdot b_1 > 0$ (because $\cB \cap \cone(\cS) = \Zero$) and $h_0 \cdot b_1 > 0$ (by Lemma~\ref{lem10}),
it is clear from \eqref{eq:lem9} that we can select $\lambda = \lambda'$ sufficiently close to $1$ such that 
$h' \cdot b_1 < 0$. Setting $v'=\lambda' v_1 + (1-\lambda')v_0$, we get $b_1 \in  \cB \cap \cone(\cS^1)$.
\end{proof}

\begin{lem} 
\label{lem3}
Suppose Assumption~\ref{assum2} and \eqref{eq:r1}-\eqref{eq:BG1} hold.
Let $v'$ be as in Lemma~\ref{lem9}. 
If the invariance conditions for $\cS$ are solvable,
then $\cS^1 \overset{\cS^1}{\longrightarrow} \cF_0$ by affine feedback.
\end{lem}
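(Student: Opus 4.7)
My plan is to invoke Theorem~6.2 of \cite{MEB10}, which (as recalled in Section~\ref{sec:indices} of this paper) solves RCP by affine feedback on any simplex for which (A2) fails and the invariance conditions are solvable. For $\cS^1$ with exit facet $\cF_0$, Lemma~\ref{lem9} already supplies $b_1 \in \cB \cap \cone(\cS^1) \neq \Zero$, so the analogue of (A2) is violated. The remaining task is to check that the invariance conditions for $\cS^1$ are solvable at each of its $n+1$ vertices.

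First I would catalog the outward normals of $\cS^1$. By Lemma~\ref{lem8}, the new facet $\cF_0' = \conv\{v', v_2, \ldots, v_n\}$ has outward normal $h'$; for $j \in \{2, \ldots, n\}$ the facet of $\cS^1$ obtained by dropping $v_j$ lies in the same hyperplane as $\cF_j \subset \cS$ (because $v'$ is a convex combination of $v_0$ and $v_1$, both of which are in $\cF_j$), and so retains outward normal $h_j$. A direct comparison then gives $\cC_{\cS^1}(v_1) = \cC_{\cS}(v_1)$, and for $i \ge 2$ the cone $\cC_{\cS^1}(v_i)$ is obtained from $\cC_{\cS}(v_i)$ by replacing the constraint $h_1 \cdot y \le 0$ with $h' \cdot y \le 0$.

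The key leverage is that $b_1 \in \cB$ satisfies $h' \cdot b_1 < 0$ (by Lemma~\ref{lem9}) and $h_j \cdot b_1 \le 0$ for $j = 2, \ldots, n$ (since $b_1 \in \cC_{\cS}(v_1)$). At $v_1$ the original control value works unchanged. At each $v_i$ with $i \ge 2$, I would shift the original velocity $y_i^{(0)} \in \cC_{\cS}(v_i)$ by $\tau b_1$ for sufficiently large $\tau > 0$, achievable by adjusting $u_i$ since $b_1 \in \cB$: the remaining facet constraints are preserved as sums of non-positive quantities, while the new constraint $h' \cdot y \le 0$ is enforced by the strict negativity of $h' \cdot b_1$. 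At $v'$, since $v_1 \in \cG$ ensures $Av_1 + a \in \cB$, the admissible flat $Av' + a + \cB$ coincides with $(1-\lambda)(Av_0 + a) + \cB$; I would take $y_{v'} = (1-\lambda) y_0^{(0)} + t b_1$ for large $t$, which lies in this flat and, by the same dominance argument applied to $y_0^{(0)} \in \cone(\cS) \subset \cone(\cS^1)$, lands inside $\cone(\cS^1) = \cC_{\cS^1}(v')$.

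The main subtlety is the vertex $v'$, where in general $v' \notin \cO$, so the admissible velocity set is an honest affine flat rather than a subspace and one cannot simply set the velocity to some element of $\cB \cap \cone(\cS^1)$. The shifted convex-combination form above reconciles the affine offset inherited from $v_0$ with the cone direction $b_1$ provided by Lemma~\ref{lem9}. Once invariance is verified at every vertex of $\cS^1$, Theorem~6.2 of \cite{MEB10} delivers an affine feedback solving $\cS^1 \overset{\cS^1}{\longrightarrow} \cF_0$.
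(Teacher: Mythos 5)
Your proposal is correct and follows essentially the same route as the paper: verify the invariance conditions for $\cS^1$ vertex by vertex by adding a sufficiently large multiple of $b_1$ (with $h'\cdot b_1<0$ and $h_j\cdot b_1\le 0$, $j=2,\ldots,n$) to velocities inherited from the solvable invariance conditions of $\cS$, then invoke Theorem~6.2 of \cite{MEB10} since $\cB\cap\cone(\cS^1)\neq\Zero$. The only cosmetic difference is at $v'$: the paper takes $y'=\lambda y_1+(1-\lambda)y_0+\epsilon_1 b_1$ by interpolating the controls $u_0,u_1$, whereas you absorb the $\lambda y_1$ term into $\cB$ using $Av_1+a\in\cB$; both are valid and equivalent.
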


\begin{proof}
By Lemma~\ref{lem9}, we have $\cB \cap \cone(\cS^1) \neq \Zero$. We show that the
invariance conditions are solvable for $\cS^1$. First, consider the vertex $v'$. 
Since the invariance conditions for $\cS$ are solvable, there exist control inputs $u_0, u_1 \in \RR^m$ such 
that the invariance conditions for $\cS$ at $v_0$ and $v_1$ are satisfied, i.e.
$y_0 := A v_0 + B u_0 + a \in \cone(\cS)$ and 
$y_1 := A v_1 + B u_1 + a \in \cB \cap \cC(v_1)$. 
In particular, $h_j \cdot y_i \le 0$ for $i = 0, 1$ and $j = 2,\ldots,n$. Now by Lemma~\ref{lem9}, there exists 
$\lambda \in (0,1)$ such that with $v' := \lambda v_1 + (1 - \lambda) v_0$, $h' \cdot b_1 < 0$ and
$h_j \cdot b_1 \le 0$ for $j = 2,\ldots, n$. Let $w_1$ be such that $b_1 = B w_1$. Set $\epsilon_1 > 0$ and let
$u' := \lambda u_1 + (1-\lambda) u_0 + \epsilon_1 w_1$. 
Then $y' := A v' + B u' + a = \lambda y_1 + (1-\lambda) y_0 + \epsilon_1 b_1$.
Thus, $h_j \cdot y' \le 0$ for $j = 2,\ldots,n$ and for $\epsilon_1 > 0$ sufficiently large, $h' \cdot y' < 0$.
That is, the invariance conditions for $\cS^1$ are solvable at $v'$.

Next consider $v_1$. Since the invariance conditions for $\cS^1$ at $v_1$ are identical to those for $\cS$ at $v_1$, and since
the latter are by assumption solvable, the former are also solvable. 
Finally, consider vertices $v_i$, $i = 2,\ldots,n$. There exist control inputs $u_i \in \RR^m$ such that
$y_i := A v_i + B u_i + a$
satisfy $h_j \cdot y_i \le 0$ for $j = 2,\ldots,i-1,i+1,\ldots,n$. As above let $w_1$ be such that $b_1 = B w_1$. 
Set $\epsilon_1 > 0$ and let $u_i' := u_i + \epsilon_1 w_1$. 
Then $y_i' = A v_i + B u_i' + a = y_i + \epsilon_1 b_1$. 
Thus, $h_j \cdot y_i' \le 0$ for $j = 2,\ldots,i-1,i+1,\ldots,n$ and for $\epsilon_1 > 0$ sufficiently large, 
$h' \cdot y_i' < 0$. That is, the invariance conditions for $\cS^1$ are solvable at $v_i$.
In sum, we can apply Theorem~6.2 of \cite{MEB10} to obtain that 
$\cS^1 \overset{\cS^1}{\longrightarrow} \cF_0$ by affine feedback. 
\end{proof}

\begin{lem}
\label{lem5}
Suppose Assumption~\ref{assum2} and \eqref{eq:r1}-\eqref{eq:BG1} hold.
Let $v'$ be as in Lemma~\ref{lem9}. 
If the invariance conditions for $\cS$ are solvable then 
\begin{itemize}
\item[(i)] The invariance conditions for $\cS'$ are solvable.
\item[(ii)] $(-h') \cdot b_{m_k} > 0 \,, \quad k = 1,\ldots,p$.
\end{itemize}
\end{lem}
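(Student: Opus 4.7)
The plan is to handle the two conclusions separately. Part (i) is a convexity argument once the facet structure of $\cS'$ is sorted out, while part (ii) follows by substituting the explicit formula for $h'$ from Lemma~\ref{lem8} and then splitting into two cases according to whether $k=1$ or $k\ge 2$.

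For part (i), the first step is to identify the outward normals of the facets of $\cS'$. Apart from the new exit facet $\cF_0'$ (with normal $h'$), the facets of $\cS'$ are: the facet opposite $v'$, which coincides as a set with $\cF_1$ and therefore carries outward normal $h_1$; and, for each $i \in \{2,\ldots,n\}$, the facet opposite $v_i$, which is a sub-simplex of $\cF_i$ lying in the same supporting hyperplane of $\cS$ and hence has outward normal $h_i$ (the sign is correct because $\cS' \subset \cS$ sits on the same side of that hyperplane as $\cS$). A direct membership check then shows that the invariance conditions of $\cS'$ at $v_0$ and at each $v_i$ with $i \ge 2$ are word-for-word those of $\cS$ at the same vertex, so the inputs $u_0, u_2, \ldots, u_n$ solving invariance for $\cS$ also serve $\cS'$. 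The only new invariance condition is at $v'$, and bookkeeping shows it reads $h_j \cdot y' \le 0$ for $j = 2,\ldots,n$. Setting $u' := \lambda u_1 + (1-\lambda) u_0$ gives $y' = \lambda y_1 + (1-\lambda) y_0$, and since the invariance of $\cS$ at both $v_0$ and $v_1$ already provides $h_j \cdot y_0 \le 0$ and $h_j \cdot y_1 \le 0$ for $j = 2,\ldots,n$, convexity finishes (i).

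For part (ii), I would use Lemma~\ref{lem8} to write
\[
(-h') \cdot b_{m_k} \;=\; \lambda \, (h_0 \cdot b_{m_k}) \;-\; \gamma_1(1-\lambda)\,(h_1 \cdot b_{m_k}).
\]
For $k=1$ (so $m_1 = 1$), this is a restatement of the conclusion of Lemma~\ref{lem9}. For $k \ge 2$, the definition \eqref{eq:mk} combined with $r_1 \ge 2$ gives $m_k \ge r_1 + 1 \ge 3$, so $v_{m_k}$ is distinct from both $v_0$ and $v_1$ and therefore lies in $\cF_1$; the membership $b_{m_k} \in \cC(v_{m_k})$ then forces $h_1 \cdot b_{m_k} \le 0$, making the second term in the display non-negative. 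Combining with $h_0 \cdot b_{m_k} > 0$ from Lemma~\ref{lem10} yields strict positivity, completing (ii).

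No substantial obstacle is expected: both parts are short once the facet data of $\cS'$ are laid out. The only delicate bookkeeping is the verification that the outward normal of the facet of $\cS'$ opposite $v_i$ is indeed $h_i$ (and not $-h_i$) for $i \ge 2$, which, as noted above, follows from $\cS' \subset \cS$.
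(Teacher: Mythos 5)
Your proof is correct and follows essentially the same route as the paper: part (ii) is exactly the paper's computation, using Lemma~\ref{lem9} for $k=1$ and, for $k\ge 2$, the formula \eqref{eq:hprime} together with $h_0\cdot b_{m_k}>0$ from Lemma~\ref{lem10} and $h_1\cdot b_{m_k}\le 0$ from $b_{m_k}\in\cC(v_{m_k})$. For part (i) the paper merely asserts that the invariance conditions of $\cS'$ are identical to those of $\cS$ because only the exit facet changed; your explicit facet bookkeeping and the convex-combination argument $u'=\lambda u_1+(1-\lambda)u_0$ at the new vertex $v'$ supply the detail the paper leaves implicit, so your version is, if anything, slightly more complete.
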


\begin{proof}
First we prove (i). By assumption the invariance conditions for $\cS$ are solvable, and since 
the invariance conditions for $\cS'$ are identical (the only facet that changed for $\cS'$ is $\cF_0$, 
which plays no role in invariance conditions), they are also solvable for $\cS'$. 
Next we prove (ii). First we have $(-h') \cdot b_{m_1} > 0$ by Lemma~\ref{lem9}. Second, 
since $b_{m_k} \in \cB \cap \cC(v_{m_k})$, we have $h_1 \cdot b_{m_k} \le 0$, for $k = 2, \ldots,p$. 
Also by Lemma~\ref{lem10}, $h_0 \cdot b_{m_k} > 0$, for 
$k = 2,\ldots,p$. Thus using \eqref{eq:hprime},
$(-h') \cdot b_{m_k} = -\gamma_1 (1 - \lambda) h_1 \cdot b_{m_k} + \lambda h_0 \cdot b_{m_k} > 0$, 
$k = 2, \ldots, p$. 
\end{proof}

\begin{figure}[!t]
\begin{center}
\psfrag{v0}[][][\scale]{$v_0$} 
\psfrag{v1}[][][\scale]{$v_1$} 
\psfrag{v2}[][][\scale]{$v_2$} 
\psfrag{v1p}[][][\scale]{$v'$} 
\psfrag{F0}[][][\scale]{$\cF_0$}
\psfrag{F01}[][][\scale]{$\cF_0^1$} 
\psfrag{h2}[][][\scale]{$h_2$} 
\psfrag{hp}[][][\scale]{$h'$} 
\psfrag{S}[][][\scale]{$\cS$} 
\psfrag{S1}[][][\scale]{$\cS^1$} 
\psfrag{O}[][][\scale]{$\cO$}
\psfrag{B}[][][\scale]{$\cB$}
\includegraphics[width= 0.9 \linewidth]{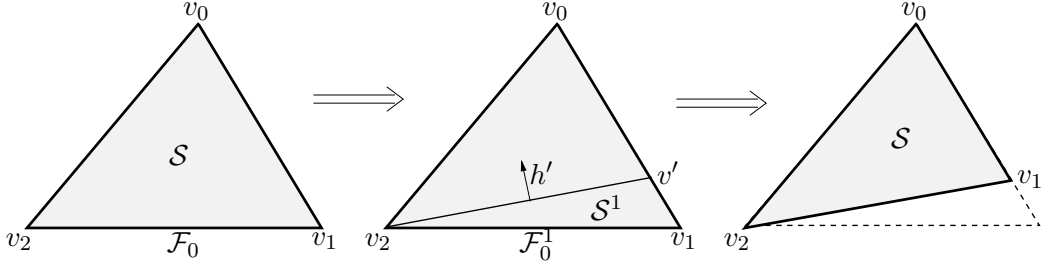}
\caption{Notation for the subdivision algorithm.}
\label{fig5}
\end{center}
\end{figure}

We have demonstrated the first step of a triangulation procedure that partitions $\cS$ into simplices on which sub-reach
control problems are solvable. Now we present a triangulation algorithm that iterates on the presented subdivision method.
It consists of $p$ iterations, one for each set $\{ v_{m_k},\ldots,v_{m_k+r_k-1} \}$, $k = 1,\ldots,p$. The notation 
$\cS^k := \conv \{ v',v_1,\ldots,v_n \}$ is understood to mean that all $n+1$ vertices of $\cS^k$ are 
assigned simultaneously in the order presented. The vertices of $\cS^k$ are later identified as
$\{ v_0^k,\ldots,v_n^k \}$. The algorithm generates simplices $\cS^1,\ldots,\cS^{p+1}$ starting from the
given simplex $\cS$. At the $k$th iteration, the current declaration of $\cS$ is split 
into a lower simplex $\cS^k$ and an upper simplex. The lower simplex is then ``thrown away'' and the remainder - 
the upper simplex - is declared to be $\cS$ with vertices called $\{ v_0,\ldots,v_n \}$ 
(overloading the vertices of the previous $\cS$). See Figure~\ref{fig5}. 
In this way each iterate mimics the first subdivision developed in the discussion above.

{\bf Subdivision Algorithm: } 
\label{triangle1} 
\begin{enumerate}
\item[1.]
Set $k = 1$.
\item[2.] 
Select $v' \in (v_0,v_{m_k})$ such that $\cB \cap \cone(\cS^k) \neq \Zero$, where
$\cS^k := \conv \{ v', v_1,\ldots,v_n \}$. 
\item[3.]
Set $\cS := \conv \{ v_0,v_1,\ldots,v_{m_k-1},v',v_{m_k+1},\ldots,v_n \}$.
\item[4.]
If $k < p$, set $k := k+1$ and go to step 2. 
\item[5.] 
Set $\cS^{p+1} := \cS$. 
\end{enumerate}

\begin{exmp}
Consider the output of the subdivision algorithm for an example with $p = 3$: 
\begin{itemize}
\item
$\cS^1 := \{ v_0^1,v_{m_1},\ldots,v_n \}$ where $v_0^1 \in (v_0,v_{m_1})$.
\item
$\cS^2 := \{ v_0^2, v_0^1, v_{m_1+1}, \ldots,v_n \}$ where $v_0^2 \in (v_0,v_{m_2})$.
\item
$\cS^3 := \{ v_0^3,v_0^1,v_{m_1+1},\ldots,v_0^2,v_{m_2+1},\ldots,v_n \}$ where $v_0^3 \in (v_0,v_{m_3})$.
\item
$\cS^4 := \{ v_0,v_0^1,v_{m_1+1},\ldots,v_0^2,v_{m_2+1},\ldots,v_0^3,v_{m_3+1},\ldots,v_n \}$. 
\end{itemize}
\end{exmp}

From this example we observe several features:
\begin{itemize}
\item
For each $k = 1,\ldots,p$, we have $v_0^k \in \cS^k \cap \cdots \cap \cS^{p+1}$ and
$v_{m_k} \in \cS^1 \cap \cdots \cap \cS^k$. 
\item
Simplex $\cS^{p+1}$  is the same as the originally given $\cS$ except that vertices 
$v_{m_1},\ldots, v_{m_p}$ have been replaced by new vertices $v_0^1,\ldots,v_0^p$, respectively.
\item
Because of the previous property $G^{p+1} := \cS^{p+1} \cap \cO$ has dropped in dimension to 
$\kappa-p = \mh - 1$ because $p$ vertices originally in $\cO$ have been removed from $\cS^{p+1}$. 
\end{itemize}

Let $\cF_0^k = \conv\{ v_1^k,\ldots,v_n^k \}$ denote the exit facet of $\cS^k = \conv\{ v_0^k,\ldots,v_n^k \}$. 
The triangulation generated by the algorithm has the property that 
$\cS^k \cap \cS^{k-1} = \cF^k_0$, $k = 2,\ldots,p+1$, 
and closed-loop trajectories follow paths through simplices with decreasing indices.
Thus, $\cS \overset{\cS}{\longrightarrow} \cF_0$ is achieved by implementing affine
controllers that achieve $\cS^k \overset{\cS^k}{\longrightarrow} \cF_0^k$ for $k = 1,\ldots,p+1$. 
In order to guarantee that switching occurs in the proper sequence (with decreasing simplex indices),
and to avoid chattering caused by measurement errors, a {\em discrete supervisor} should 
accompany the implementation of the piecewise affine feedback. The supervisor enforces the following
rule:
\begin{enumerate}
\item[(DS)]
At a point $x \in \cS$ belonging to more than one simplex $\cS^j$, the controller
for the simplex with the higher index is used.
\end{enumerate}

\begin{thm}
\label{thm:pwa}
Suppose Assumption~\ref{assum2} and \eqref{eq:r1}-\eqref{eq:BG1} hold.
If the invariance conditions for $\cS$ are solvable, then 
$\cS \overset{\cS}{\longrightarrow} \cF_0$ by piecewise affine feedback.
\end{thm}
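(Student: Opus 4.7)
The plan is to run the Subdivision Algorithm, solve RCP on each of the $p+1$ resulting sub-simplices by an affine feedback, and then glue the local feedbacks together via the discrete supervisor (DS).

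For the first $p$ sub-simplices I proceed by induction on the iteration counter $k = 1,\ldots,p$. At iteration $k$ the ``current'' $\cS$ (obtained from the original by $k-1$ prior vertex replacements) is claimed to satisfy two properties: (a) its invariance conditions are solvable, and (b) the outward normal $h_0$ of its current exit facet $\cF_0$ satisfies $h_0 \cdot b_{m_k} > 0$. For $k=1$, (a) is the hypothesis of the theorem and (b) is Lemma~\ref{lem10}. Granting (a) and (b), Lemma~\ref{lem9} selects $v' \in (v_0, v_{m_k})$ with $\cB \cap \cone(\cS^k) \neq \Zero$, and Lemma~\ref{lem3} then yields $\cS^k \overset{\cS^k}{\longrightarrow} \cF_0^k$ by an affine feedback $u = K^k x + g^k$. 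Finally, Lemma~\ref{lem5} propagates both (a) and (b) to the remainder $\cS'$ (whose new outward exit-facet normal is $-h'$), completing the induction step.

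The terminal simplex $\cS^{p+1}$ requires separate treatment. Because the $p$ vertices $v_{m_1},\ldots,v_{m_p}$ of the original $\cG$ have been replaced by $v_0^1,\ldots,v_0^p$, none of which lie in $\cO$, the face $\cG^{p+1} := \cS^{p+1} \cap \cO$ contains exactly $\mh$ vertices. By Theorem~\ref{thm:indices}, removing $b_{m_k}$ from each block $k$ leaves $r_k - 1$ linearly independent vectors, and by Lemma~\ref{lem12} distinct blocks contribute mutually independent directions; together with the extra vectors $\{b_{r+1},\ldots,b_{\kappa+1}\}$ this yields $\mh$ linearly independent vectors in $\cB$, one for each cone $\cB \cap \cC(v_i)$ indexed by $\cG^{p+1}$. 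Thus the number of independent directions available matches $|\cG^{p+1}|$, so the analog of (A3) is violated for $\cS^{p+1}$, and Theorem~6.7 of \cite{MEB10} delivers an affine feedback $u = K^{p+1} x + g^{p+1}$ with $\cS^{p+1} \overset{\cS^{p+1}}{\longrightarrow} \cF_0^{p+1}$.

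The global piecewise affine feedback is $f(x) := K^k x + g^k$ for $x \in \cS^k$, with (DS) resolving overlaps $\cS^k \cap \cS^{k-1} = \cF_0^k$ by selecting the higher-index controller. A trajectory starting in any $\cS^{k_0}$ reaches $\cF_0^{k_0}$ in finite time by the local result, whereupon (DS) switches to the controller for $\cS^{k_0-1}$; inductively the trajectory descends $\cS^{p+1} \to \cS^p \to \cdots \to \cS^1$ and exits $\cS^1$ through $\cF_0^1 = \cF_0$, establishing condition (i) of Problem~\ref{prob0}. Conditions (ii) and (iii) are inherited piecewise from the affine structure of each $K^k x + g^k$. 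The main technical obstacle is the handoff at each switching facet $\cF_0^k$: one must verify that (DS) forces the trajectory into $\cS^{k-1}$ rather than permitting it to slide along, or chatter across, $\cF_0^k$. This is ruled out by the strict inequality $h_0^k \cdot (A x + B f(x) + a) < 0$ on $\cF_0^k$, which is built into each affine feedback through Lemma~\ref{lem9}.
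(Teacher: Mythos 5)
Your proposal is correct and follows essentially the same route as the paper: induction over the Subdivision Algorithm using Lemmas~\ref{lem10}, \ref{lem9}, \ref{lem3} and \ref{lem5}, an application of Theorem~6.7 of \cite{MEB10} to the terminal simplex $\cS^{p+1}$ (with the same count showing $\cG^{p+1}$ retains $\mh$ vertices and $\mh$ independent directions), and the strict facet inequality to force descent through simplices of decreasing index. The only point where you are terser than the paper is the verification of condition (iii): the paper explicitly checks that, under (DS), each removed vertex $v_{m_k}$ receives the controller of the highest-index simplex containing it and that the invariance conditions of $\cS^k$ at $v_{m_k}$ coincide with those of the original $\cS$ (since they do not involve the new normal $-h'$) --- a routine but worthwhile check.
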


\begin{proof}
Form the triangulation $\{ \cS^1,\ldots, \cS^{p+1} \}$ of $\cS$ based on the Subdivision Algorithm. 
We show by induction that $\cS^k \overset{\cS^k}{\longrightarrow} \cF_0^k$ 
by affine feedback for $k = 1,\ldots,p$ (momentarily ignoring the rule (DS)). 
For the initial step, by assumption the invariance conditions for $\cS$
are solvable and by Lemma~\ref{lem10}, $h_0 \cdot b_{m_k} > 0$ for $k = 1,\ldots,p$. Thus, by 
Lemma~\ref{lem3}, $\cS^1 \overset{\cS^1}{\longrightarrow} \cF_0$ by affine feedback. 
Now assume that at the $j$th step the invariance conditions are solvable for 
(the current) $\cS$ and $h_0 \cdot b_{m_k} > 0$ for $k = 1,\ldots,p$. Then
by Lemma~\ref{lem3}, $\cS^{j} \overset{\cS^j}{\longrightarrow} \cF_0^j$ by affine feedback. 
Now consider the $(j+1)$th step. By the algorithm $\cS := \conv \{ v_0,v_1,\ldots,v_{m_j-1},v',v_{m_j+1},\ldots,v_n \}$ 
and $h_0 = -h'$, where $v'$ and $h'$ are provided by the $j$th step. 
By Lemma~\ref{lem5}, the invariance conditions
are solvable for $\cS$ and $h_0 \cdot b_{m_k} > 0$ for $k = 1,\ldots,p$. Then by 
Lemma~\ref{lem3}, $\cS^{j+1} \overset{\cS^{j+1}}{\longrightarrow} \cF_0^{j+1}$ by affine feedback. 

Next consider $\cS^{p+1}$. We observe that $\cS^{p+1}$ and $\cS$ share the same invariance 
conditions since they only differ in their exit facets, so the invariance conditions for $\cS^{p+1}$
are solvable. Now let $\cG^{p+1} := \cS^{p+1} \cap \cO$. Then by the algorithm,
$\cG^{p+1} = \conv\{ v_2,\ldots,v_{m_2-1},v_{m_2+1},\ldots,v_{m_p-1},v_{m_p+1},\ldots,v_{\kappa+1} \}$. 
We can see that the algorithm has removed the $p$ vertices $v_{m_1},v_{m_2},\ldots,v_{m_p}$ from $\cG$. 
There remain $\mh$ linearly independent vectors in $\cB$ associated with $\cG^{p+1}$ (an $(\mh-1)$-dimensional simplex) given by 
$\{ b_2,\ldots,b_{m_2-1},b_{m_2+1},\ldots,b_{m_p-1},b_{m_p+1},\ldots,b_{\kappa+1} \}$. 
Therefore, we can apply Theorem~6.7 of \cite{MEB10} to obtain
$\cS^{p+1} \overset{\cS^{p+1}}{\longrightarrow} \cF_0^{p+1}$. 

Next, we must prove that trajectories progress through simplices with decreasing indices only. 
Consider w.l.og. the boundary between $\cS^1$
and $\cS^2$ given by $\cF_0^2 = \conv\{ v', v_2,\ldots,v_n \}$, and let $u = K_1 x + g_1$ be the affine
feedback obtained for $\cS^1$. 
We show that for any $x_0 \in \cS^1 \setminus \cF_0^2$, closed-loop trajectories do not reach 
$\cF_0^2$. This in turn means that trajectories never return to $\cS^2$ from $\cS^1$ after leaving $\cS^2$. 
This can be deduced from the proof of Lemma~\ref{lem3} where it is shown that the controls
$\{ u', u_2,\ldots,u_n \}$ can be selected so that 
$h' \cdot (A v' + B u' + a) < 0$ and $h' \cdot (A v_i + B u_i + a) < 0$, $i = 2, \ldots, n$. 
By convexity, $h' \cdot (A x + B (K_1 x + g_1) + a) < 0$ for all $x \in \cF_0^2$, from which the 
result easily follows.

Finally we verify conditions (ii) and (iii) of RCP. Condition (ii) follows immediately 
because there is a finite number of affine feedbacks each defined 
on a compact set $\cS^k$ that does not contain an equilibrium. For (iii)
we must verify that the piecewise affine feedback $u = f(x)$ resulting from (DS) 
satisfies \eqref{eq:inv2}. We show that it satisfies \eqref{eq:inv}, and by convexity also \eqref{eq:inv2}. 
First consider $\cS^{p+1}$. Its exit facet is
\[
\cF_0^{p+1} = \{ v_0^1,v_{m_1+1},\ldots,v_{m_1+r_1-1},\ldots,
v_0^p,v_{m_p+1},\ldots,v_{m_p+r_p-1},v_{r+1},\ldots,v_n \}.
\] 
The invariance conditions for $\cS^{p+1}$ are identical to those for $\cS$ and the 
controller for $\cS^{p+1}$ takes precedence over controllers for simplices with lower index. 
This implies the invariance conditions for $\cS$ hold at $v_0$ and all vertices of $\cF_0^{p+1}$.
The only vertices of $\cF_0$ that are not in $\cF_0^{p+1}$ are $v_{m_1}, v_{m_2},\ldots, v_{m_p}$. For these vertices we
have: $v_{m_1} \in \cS^1$, $v_{m_2} \in \cS^1 \cap \cS^2$,...,$v_{m_p} \in \cS^1 \cap \cdots \cap \cS^p$. 
We use the affine controller for the simplex with the highest index. But the invariance conditions 
for $\cS^k$ at $v_{m_k}$ are precisely those for $\cS$. We can see this because the invariance 
conditions for $v_{m_k}$ do not include the normal vector $-h'$ 
given in Lemma~\ref{lem8}.
\end{proof}


The main result of the paper stated next is that piecewise affine feedbacks are a sufficiently rich class to solve 
RCP when it is solvable by open-loop controls. The proof shows by a process of elimination 
that either RCP is solvable by affine feedback \cite{HVS06,RB06,MEB10} or it is solvable by 
(discontinuous) PWA feedback via the Subdivision Algorithm.

\begin{thm}
Suppose Assumption~\ref{assum1} holds. Then the following are equivalent:
\begin{enumerate}
\item
$\cS \overset{\cS}{\longrightarrow} \cF_0$ by piecewise affine feedback.
\item
$\cS \overset{\cS}{\longrightarrow} \cF_0$ by open-loop controls.
\end{enumerate}
\end{thm}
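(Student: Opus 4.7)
The plan is to prove the two implications separately, the forward direction being essentially trivial and the reverse being a case analysis that routes each scenario to one of the previously established sufficient conditions.

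For (1)$\Rightarrow$(2), given a piecewise affine feedback $u = f(x)$ satisfying conditions (i)--(iii) of Problem~\ref{prob0}, I would define for each $x_0 \in \cS$ the open-loop control $\mu_{x_0}(t) := f(\phi_f(t,x_0))$, where $\phi_f$ denotes the closed-loop trajectory (resolved by the discrete supervisor (DS) at simplex boundaries). Since the triangulation is finite and $\phi_f(\cdot,x_0)$ stays in $\cS$ up to the exit time and is bounded, $\mu_{x_0}$ is measurable and bounded on compact intervals, so it is an admissible open-loop control in the sense of Section~\ref{sec:necessary}. Conditions (i), (ii), (iii) of Definition~\ref{def:ol} then follow immediately from the matching conditions (i), (ii), (iii) of Problem~\ref{prob0}, with the same exit time $T(x_0)$ and the same $\varepsilon$.

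For (2)$\Rightarrow$(1), I would argue by a mutually exclusive case split using Assumption~\ref{assum1} and then the structure of Assumption~\ref{assum2}. If $\cG = \emptyset$ or $v_0 \in \cG$, solvability by affine feedback is immediate from Theorem~6.1 and Remark~7.1 of \cite{MEB10}, and an affine feedback is a piecewise affine feedback with trivial triangulation. Otherwise $\cG$ is a non-empty face of $\cS$ not containing $v_0$, so (A1) holds. From (2), Theorem~\ref{thm:nec1} gives solvability of the invariance conditions \eqref{eq:inv}, and Theorem~\ref{thm:nec2} gives (A4). It remains to dispose of the cases in which (A2) or (A3) fails: if $\cB \cap \cone(\cS) \ne \Zero$, Theorem~6.2 of \cite{MEB10} yields an affine feedback; if the number $\mh$ defined in (A3) equals $\kappa+1$, Theorem~6.7 of \cite{MEB10} again yields an affine feedback. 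In the remaining case all of (A1)--(A4) hold, so Assumption~\ref{assum2} is in force. Theorem~\ref{thm:indices} then supplies the reach control indices $\{r_1,\ldots,r_p\}$ and the associated list of vectors satisfying \eqref{eq:r1}--\eqref{eq:BG1}, and since the invariance conditions are solvable, Theorem~\ref{thm:pwa} delivers a piecewise affine feedback solving RCP.

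The steps are essentially bookkeeping: the real content has already been developed in Theorems~\ref{thm:nec1}, \ref{thm:nec2}, \ref{thm:indices}, \ref{thm:pwa} and in \cite{MEB10}. The only point requiring care is to verify that the case split is exhaustive and mutually exclusive, that each invoked sufficient condition matches its hypotheses (in particular that (A1)--(A4) are exactly what Theorem~\ref{thm:pwa} needs, together with solvability of the invariance conditions obtained from Theorem~\ref{thm:nec1}), and that the ``affine feedback'' conclusions are legitimately subsumed under ``piecewise affine feedback'' as defined in Definition~\ref{def:pwa}. No step is expected to be a genuine obstacle; the main potential pitfall is simply overlooking one of the degenerate configurations of $\cG$ relative to $v_0$ when dispatching to the correct prior result.
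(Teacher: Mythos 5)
Your proposal is correct and follows essentially the same route as the paper: the forward implication is the obvious evaluation of the feedback along closed-loop trajectories, and the reverse implication is the same case analysis dispatching to Theorems~6.1, 6.2, and 6.7 of \cite{MEB10} and then, when Assumption~\ref{assum2} holds, to Theorem~\ref{thm:pwa} via Theorems~\ref{thm:nec1} and \ref{thm:nec2}. The only cosmetic difference is that you split off $v_0 \in \cG$ up front via Remark~7.1 of \cite{MEB10}, whereas the paper first branches on $\cB \cap \cone(\cS)$ and then uses Theorem~\ref{thm:nec2} to conclude $v_0 \notin \cG$; both orderings cover the same cases.
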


\begin{proof}
(1) $\Longrightarrow$ (2) is obvious. \\
(2) $\Longrightarrow$ (1) \
Suppose $\cS \overset{\cS}{\longrightarrow} \cF_0$ by open-loop controls. By Theorem~\ref{thm:nec1}, 
the invariance conditions are solvable. Let $\cG := \cS \cap \cO$. If $\cG = \emptyset$, then by 
Theorem~6.1 of \cite{MEB10}, $\cS \overset{\cS}{\longrightarrow} \cF_0$ by affine feedback. Suppose instead 
$\cG \neq \emptyset$. If $\cB \cap \cone(\cS) \neq \Zero$, then by Theorem~6.2 of \cite{MEB10}, 
$\cS \overset{\cS}{\longrightarrow} \cF_0$ by affine feedback. Suppose instead $\cB \cap \cone(\cS) = \Zero$.
From Theorem~\ref{thm:nec2}, $v_0 \not\in \cG$, so by reordering indices, $\cG = \conv\{ v_1,\ldots,v_{\kappa+1} \}$, 
where $0 \le \kappa < n$. 
Let $\{ b_1,\ldots,b_{\mh} ~|~ b_i \in \cB \cap \cC(v_i) \}$ be a maximal linearly independent set as
in (A3).
If $\kappa < \mh$, then by Theorem~6.7 of \cite{MEB10}, $\cS \overset{\cS}{\longrightarrow} \cF_0$ 
by affine feedback. Suppose instead $\kappa \ge \mh$. 
By Theorem~\ref{thm:nec2}, $\cB \cap \cC(v_i) \neq \Zero$ for 
$i \in \{ 1,\ldots,\kappa+1 \}$. Then Assumption~\ref{assum2} holds 
and the reach control indices can be defined. 
By Theorem~\ref{thm:pwa}, $\cS \overset{\cS}{\longrightarrow} \cF_0$ 
by piecewise affine feedback. 
\end{proof}

\section{Examples}
\label{sec:examples}

\subsection{Example 1}
Consider the system 
\[
\dot{x} = 
\left[
\begin{array}{rr}
0 & 1  \\
0  & 0 \\
\end{array}
\right] x + 
\left[ \begin{array}{r} 0 \\ 1 \end{array} \right] u +
\left[ \begin{array}{r}  0 \\ 0 \end{array} \right] \,. 
\]
Safety constraints on both $x_1$ and $x_2$ determine a polyhedral state space 
within which the dynamics evolve. The polyhedral state space is triangulated according 
to Assumption~\ref{assum1}. We focus on the reach control problem for a specific simplex of the triangulation: 
consider the simplex $\cS$ determined by vertices $v_0 =(-1,1) $, $v_1 = (1,0)$ and $v_2 = (0,0)$. 
It can be verified that $\cO = \{ x \in \RR^2 ~|~ x_2  = 0 \}$, $\cG = \conv\{v_1,v_2 \}$, 
$\kappa = 1$, and $\mh = m = 1$. Also $\cB \cap \cone(\cS) = \Zero$.
By the results of \cite{MEB10}, RCP is not solvable by continuous state feedback. 
For example, suppose we choose control values $u_0 = -\frac{3}{4}$, $u_1 = -1$, and $u_2 = 1$
to satisfy the invariance conditions \eqref{eq:inv}. By the method in \cite{HVS04}, this yields an affine feedback
$u = \left[ \begin{array}{rr} -2 & -3.75 \end{array} \right] x + 1$. 
Simulation of the closed-loop system is shown in Figure~\ref{example4}(a). 
We observe there exists a closed-loop equilibrium point on $\cG$. Now we show the problem is 
solvable by piecewise affine feedback. 

Let $b_1 = (0,-1) \in \cB \cap \cC(v_1)$. Since $h_0 = (0,-1)$, we have $h_0 \cdot b_1 > 0$,
verifying Lemma~\ref{lem10}. Next, we choose $v' = (0.5,0.25)$ along the simplex edge $(v_0,v_1)$
such that from Lemma~\ref{lem8}, $h' = (-0.25, 0.5)$. Then $h' \cdot b_1 < 0$ and 
$b_1 \in \cB \cap \cone(\cS^1)$, verifying Lemma~\ref{lem9}. 
Let $\cS^1 := \conv\{ v',v_1,v_2 \}$, $\cS^2 := \conv\{ v_0, v',v_2 \}$, and 
$\cF'_0 = \conv \{v',v_2\}$.
To satisfy the invariance conditions for $\cS^1$ we choose control inputs at the vertices to be
$u' =  -1$, $u_1 = -1$, and $u_{12} = -1$.
Similarly, for $\cS^2$ we choose $u_0 =  -\frac{3}{4}$, $u' = -1$, and $u_{22} = 1$. 
The piecewise affine feedback is 
\begin{equation*}
u :=  
\left\{
\begin{array}{ll}
\left[ \begin{array}{rr} 0 & 0 \end{array} \right] x - 1 \,,            & x \in \cS^1 \\
\left[ \begin{array}{rr} -2.0833 & -3.833 \end{array} \right] x + 1 \,, & x \in \cS^2 \,.
\end{array}
\right.
\end{equation*}
By Theorem~6.2 of \cite{MEB10}, $\cS^1 \overset{\cS^1}{\longrightarrow} \cF_0$ using $u$. 
Because $\cG^2 := \cS^2 \cap \cO = \{ v_2 \}$, we have $\mh^2 = 1$ and $\kappa^2 = 0$ for $\cS^2$. 
By Theorem~6.2 of \cite{MEB10}, $\cS^2 \overset{\cS^2}{\longrightarrow} \cF_0'$ using $u$. 
The closed-loop vector field is shown in Figure~\ref{example4}(b), where it is clear that RCP is solved.

\begin{figure}[!t] 
\begin{center}
\subfigure[]{\includegraphics[width = 0.48\linewidth]{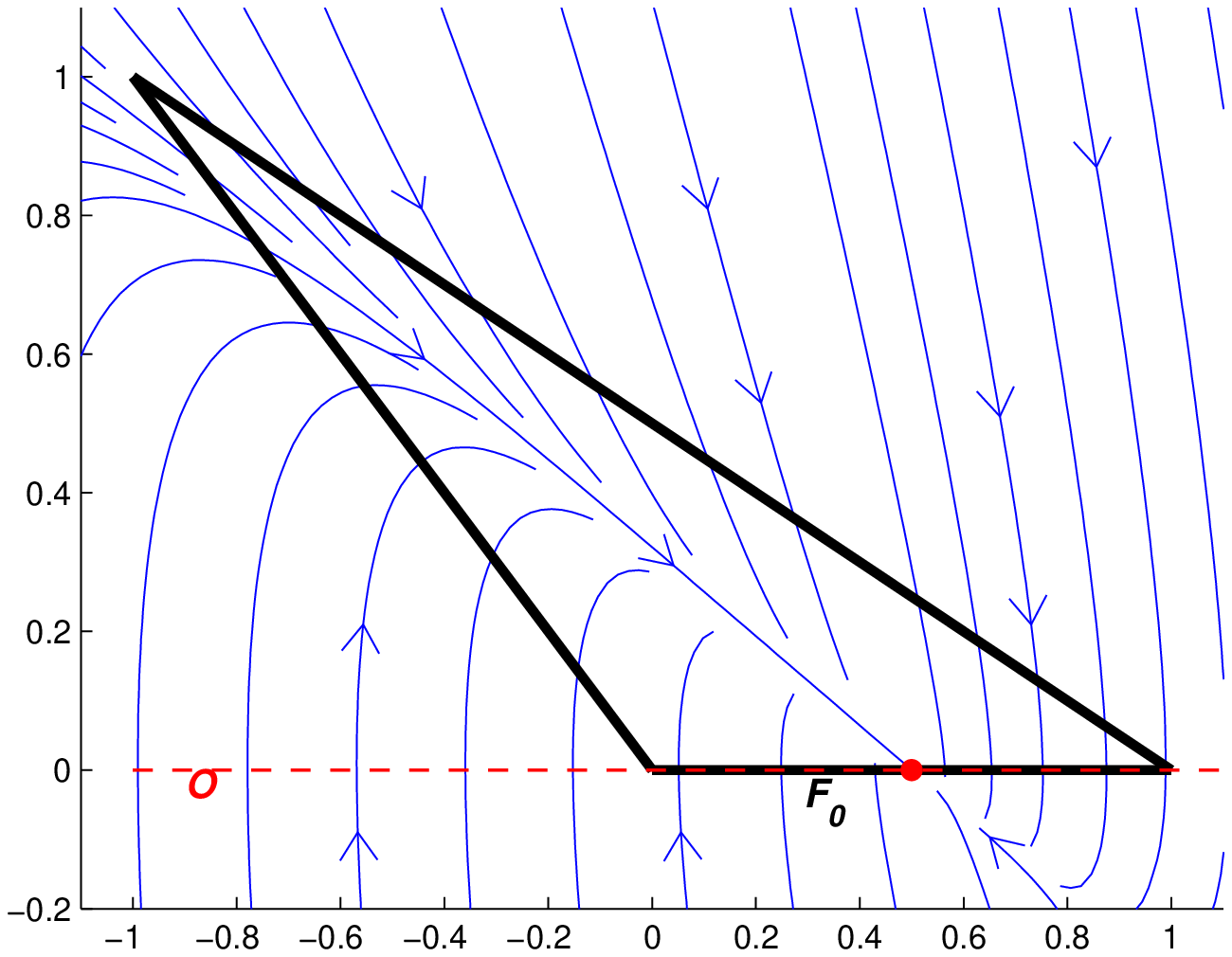}}
\subfigure[]{\includegraphics[width = 0.48\linewidth]{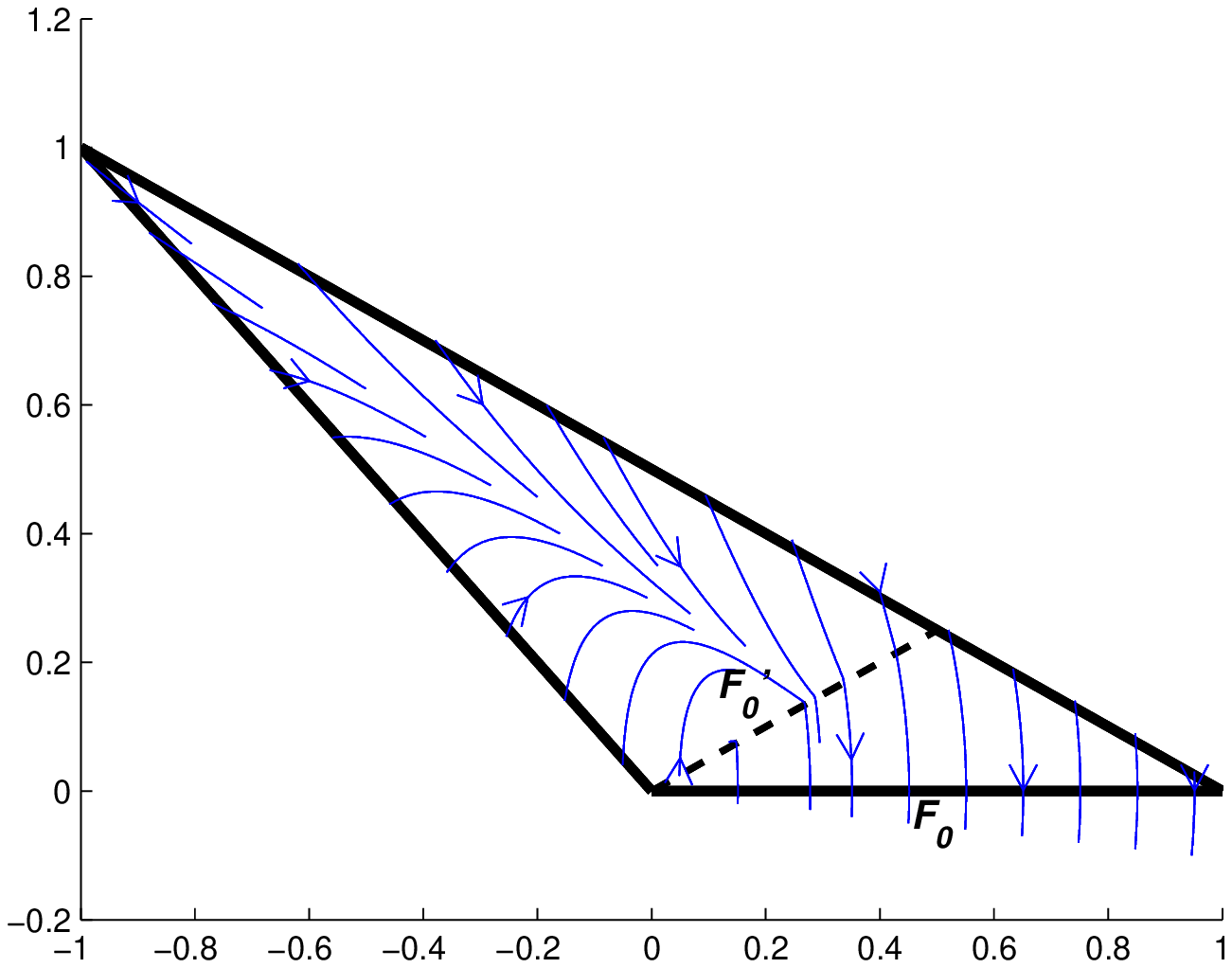}}
\caption{Closed-loop vector fields using (a) affine feedback and (b) piecewise affine feedback.}
\label{example4}
\end{center}
\end{figure}

\subsection{Example 2}
Consider the simplex $\cS$ in $\mathbb{R}^4$ defined by the vertices $v_0 =(0,0,0,0)$, 
$v_1 = (1,0,0,0)$,  $v_2 = (0,1,0,0)$, $v_3 =(0,0,1,0)$, and $v_4 =(0,0,0,1)$. 
Consider the system 
\[
\dot{x} = 
\left[
\begin{array}{rrrr}
-3 & -3 & -3 & 1 \\
0  & 0 & 0  & -2\\
-3 & -3 & -3 & 1 \\
0  & 0 & 0  & -2\\
\end{array}
\right] x + 
\left[ \begin{array}{rr} 0 & -2 \\ 0 & 1 \\ -2 & 0 \\ 1 & 0 \end{array} \right] u +
\left[ \begin{array}{r}  1 \\ 1 \\ 1 \\ 1 \end{array} \right] \,. 
\]
We compute $\cO = \{ x \in \RR^4 ~|~ x_1 + x_2 +x_3 + x_4-1 =0 \}$. 
Thus, $\cG = \cF_0$, and we note that $\kappa = 3$, $\mh = m = 2$, and $\cB \cap \cone(\cS) = \Zero$. 
By the results of \cite{MEB10}, RCP is not solvable by continuous state feedback. Now we show 
it is solvable by piecewise affine feedback.
First we examine the structure of $\cB$ (note that indices are not reordered, as is the convention
in our proofs). We find by inspect that $b_1 := (-2,1,0,0) \in \cB \cap \cC(v_1)$,
$b_3 := (0,0,-2,1) \in \cB \cap \cC(v_3)$, and $\cB = \spn \{ b_1, b_3 \}$. 
In particular, $b_2 := -b_1 \in \cB \cap \cC(v_2)$ and
$b_4 := -b_3 \in \cB \cap \cC(v_4)$. Thus, $r_1 = 2$ and $r_2 = 2$.

\subsubsection{First subdivision}

In the first iteration $\cS$ is subdivided into simplices $\cS^1$ and $\cS'$. 
Since $b_2 \cdot h_0 > 0$, we choose $v' = (0,0.75,0,0) \in (v_0,v_2)$ 
such that we obtain the condition $\cB \cap \cone(\cS^1) \neq \Zero$.  Hence 
$\cS'$ = conv$\{v_0,v_1,v',v_3,v_4\}$ and $\cS^1$ = conv$\{v',v_1,v_2,v_3,v_4\}$. 
In order to satisfy the invariance conditions for $\cS^1$ the control inputs at the vertices of $\cS^1$ 
are chosen as $u' = (-1,-2)$, $u_{11} = (-1,-2)$, $u_{12}= (-1,-2)$, $u_{13}=(-1,-2)$, and 
$u_{14}=(1,0)$. This yields an affine feedback
\[
u := 
\left[ \begin{array}{rrrr} 0 & 0 & 0 & 2 \\ 0 & 0 & 0 & 2  \end{array} \right] x +\left[ \begin{array}{r} -1 \\ 
       -2   \end{array} \right]  \,, \qquad \qquad x \in \cS^1 \,.
\] 
For $\cS^1$ the invariance conditions are solvable and $\cB \cap \cone(\cS^1) \neq \Zero$, so by 
Theorem~6.2 of \cite{MEB10}, $\cS^1 \overset{\cS^1}{\longrightarrow} \cF_0$ using $u$. 
For $\cS'$ we have $\cG' := \cS' \cap \cO = \conv \{v_1,v_3,v_4\}$. 
Since $\kappa' = 2$ and $m = 2$, RCP is not solvable by continuous state feedback on $\cS'$,
and further subdivision of $\cS'$ is required.

\subsubsection{Second subdivision} 
 
Consider the simplex $\cS' = \conv\{ v_0, v_1, v' , v_3, v_4 \}$, where 
$v' \in (v_0,v_2)= (0,0.75,0,0)$ and the exit facet is $\cF_0'$ =conv$\{v_1,v', v_3,v_4\}$.   
We subdivide $\cS'$ into simplices $\cS^3$ and $\cS^2$ and use a piecewise affine feedback law to 
solve RCP on $\cS'$.  It is clear that $b_4 \cdot h_0' > 0$ and therefore we can choose 
$v'' \in (v_0,v_4)$ such that $\cB \cap \cone(\cS^2) \neq \Zero$. 
One choice is $v'' := (0,0,0,0.8)$. 
Let $\cS^3 = \conv\{v_0,v_1,v',v_3,v''\}$ and $\cS^2 = \conv\{v'',v_1,v',v_3,v_4\}$. 
It can be verified that $b_4 \in \cB \cap \cone(\cS^2)$. 
To satisfy the invariance conditions for $\cS^2$ we choose $u'' = (-4,0.6)$, $u_{21} = (-5,-1)$, 
$u'=(-1,-2)$, $u_{23} = (-5,-1)$, and $u_{24}=(-3,1)$. 
To satisfy the invariance conditions for $\cS^3$ we choose $u_0 = (0,0)$, $u_{31} = (-1,0)$, 
$ u' = (-1,-2)$, $u_{33} = (0,-1)$, and $u'' = (-4,0.6)$. 
This yields a piecewise affine feedback 
\[
u =
\left\{
\begin{array}{ll} 
\left[ \begin{array}{rrrr} -1 & -1.33 & 0 & -5 \\ 0 & -2.66 & -1 & 0.75 \end{array} \right] x  \,,
& x \in \cS^3 \\
\left[ \begin{array}{rrrr} 3 & 9.33 & 3 & 5 \\ 0 & -1.33 & 0 & 2  \end{array} \right] x +
\left[ \begin{array}{r} -8  \\ -1   \end{array} \right]  \,,
& x \in \cS^2 \,.
\end{array}
\right.
\] 
For $\cS^2$ the invariance conditions are solvable and $\cB \cap \cone(\cS^2) \neq \Zero$, so by 
Theorem~6.2 of \cite{MEB10}, $\cS^2 \overset{\cS^2}{\longrightarrow} \cF_0'$ using $u$. 
For $\cS^3$ we have $\cG_3 = \cS^3 \cap \cO = \conv \{v_1,v_3\}$. Since $\kappa^3 = 1$ and $\mh^3 = 2$,
by Theorem~6.7 of \cite{MEB10}, $\cS^3 \overset{\cS^3}{\longrightarrow} \cF''$ using $u$. 
Indeed, $\{ b_1, b_3 ~|~ b_i \in \cB \cap \cC(v_i) \}$ is a linearly 
independent set associated with $\cG_3$.

\section{Conclusion}
The paper studies the reach control problem on simplices, and we investigate cases when the problem is not
solvable by continuous state feedback. It is shown that the class of piecewise affine feedbacks is sufficient
to solve the problem in all cases of interest; namely, those cases when the problem is solvable by open-loop 
controls.

\bibliographystyle{IEEEtranS}

\section{Appendix}
\begin{proof}[Proof of Theorem~\ref{thm:nec1}]
Let $x_0 \in \cS \setminus \cF_0$. By assumption there exists $\mu_{x_0}(t)$ 
and a time $T(x_0) > 0$ such that $\phi_{\mu_{x_0}}(t,x_0) \in \cS$ for all $t \in [0,T(x_0)]$.
Since $\mu_{x_0}(t)$ is an open-loop control, there exists $c \ge 0$ such that 
$\| \mu_{x_0}(t) \| \le c$, for all $t \in [0,T(x_0)]$. 
Define $\cY(x) := \bigl\{ A x + B w + a ~|~ w \in \RR^m \bigr\}$ 
and $\cY_c(x) := \bigl\{ A x + B w + a ~|~ w \in \RR^m, \| w \| \le c \bigr\}$. 
Now take a sequence $\{ t_i ~|~ t_i \in (0,T(x_0)] \}$ with $t_i \rightarrow 0$. Since 
$\{ y \in \cY_c(x) ~|~ x \in \cS \}$ is bounded, there exists $M > 0$ such that
$\| \phi_{\mu_{x_0}}(t_i,x_0) - x_0 \| \le M t_i$. 
Therefore $\{ \frac{\phi_{\mu_{x_0}}(t_i,x_0) - x_0}{t_i} \}$
is a bounded sequence, and there exists a convergence subsequence (with indices relabeled) such that 
$\lim_{i \rightarrow \infty} \frac{\phi_{\mu_{x_0}}(t_i,x_0) - x_0}{t_i} =: v$. 
Since $\phi_{\mu_{x_0}}(t_i,x_0) \in \cS$, by the definition of the Bouligand tangent cone, 
$v \in T_{\cS}(x_0)$. On the other hand, we have
\begin{equation}
\label{eq:limit1}
\frac{\phi_{\mu_{x_0}}(t_i,x_0) - x_0}{t_i} = \frac{1}{t_i} \int_0^{t_i} \bigl[A \phi_{\mu_{x_0}}(\tau,x_0) + 
B {\mu_{x_0}}(\tau) + a \bigr] d \tau \,.
\end{equation}
Taking the limit, we get
\begin{equation*}
\label{eq:limit2}
v = A x_0 + B \lim_{i \rightarrow \infty} {\mu_{x_0}}(t_i) + a \in \cY(x_0) \,.
\end{equation*}
We conclude that $\cY(x_0) \cap T_{\cS}(x_0) \neq \emptyset$, $x_0 \in \cS \setminus \cF_0$. 
Since $T_{\cS}(v_0) = \cone(\cS)$, and $T_{\cS}(x) = \cC(v_i)$ for $x \in (v_0,v_i)$, 
it follows that the invariance conditions are solvable at $v_0$ and along simplex edges $(v_0,v_i), i \in I$. 

Now consider $v_i, i \in I$. 
If $v_i \in \cO$, then the invariance conditions are solvable by selecting $u_i \in \RR^m$ such 
that $A v_i + B u_i + a = 0$. Instead suppose $v_i \not\in \cO$. Suppose by way of contradiction that
$\cY(v_i) \cap \cC(v_i) = \emptyset$. Then $\cY(v_i)$ and $\cC(v_i)$ are non-empty disjoint polyhedral convex sets in
$\RR^n$. By Corollary 19.3.3 of \cite{ROCK}, they are strongly separated. That is, there exists $\epsilon > 0$ such that 
$\inf_{y \in \cY(v_i), z \in \cC(v_i)} \| y - z \| > \epsilon$. 
By the upper semicontinuity of $x \mapsto \cY(x)$, there exists $\delta > 0$ such that if 
$\| x - v_i \| < \delta$, then $\cY(x) \subset \cY(v_i) + \frac{\epsilon}{2} \Bs$. 
In particular, taking $x \in (v_0,v_i)$, we get 
$\cY(x) \cap \cC(v_i) = \emptyset$, a contradiction. 
\end{proof}

\begin{proof}[Proof of Theorem~\ref{thm:nec2}]
Consider $v_i \in V \cap \cG$. Suppose $\cB \cap \cC(v_i) = \Zero$. 
Since $A v_i + a \in \cB$, there exists $u_i \in \RR^m$ such that $Av_i + Bu_i + a = 0$. 
By (ii)-(iii) of Definition~\ref{def:ol}, there exists $\varepsilon > 0$ such that for 
all $x \in \cS \setminus \cF_0$, there exists $u_x \in \RR^m$ such that 
$A x + B u_x + a \in T_{\cS}(x)$ and $\| A x + B u_x + a \| > \varepsilon$.
By continuity there exists $\delta > 0$ such that if $\| x - v_i \| < \delta$, 
then $\| A x + Bu_i + a \| < \varepsilon/2$. 
Thus, for $x \in \cS \setminus \cF_0$ with $\| x - v_i \| < \delta$, 
we have $\| B (u_x - u_i) \| > \varepsilon/2$. 
Since $\cB \cap \cC(v_i) = \Zero$ and $\cC(v_i)$ is a closed cone, 
there exists $\alpha > 0$ such that if 
$b \in \cB$ satisfies $\| b \| > \varepsilon/2$, 
then $(b + \alpha \Bs) \cap \cC(v_i) = \emptyset$. In particular, we can choose $x \in (v_0,v_i)$ 
sufficiently close to $v_i$ such that $\| A x + Bu_i + a \| < \min\{ \alpha, \epsilon/2 \}$. Then 
$A x + B u_x + a = (A x + B u_i + a) + B(u_x - u_i) \not\in \cC(v_i) = T_{\cS}(x)$, 
a contradiction. 
\end{proof}

\end{document}